\def\ddD{{\rm D}}
\def\ddB{{\rm B}}
\def\ddD{{\rm D}}
\newcommand{\cA}{\mathcal{A}}
\newcommand{\cH}{\mathcal{H}}
\newcommand{\cO}{\mathcal{O}}
\newcommand{\C}{\mathbb C}
\newcommand{\N}{\mathbb N}
\newcommand{\R}{\mathbb R}
\newcommand{\Z}{\mathbb Z}
\def\mod{\mathrm{mod}}
\DeclareMathOperator{\GL}{\mathrm GL}
\numberwithin{equation}{section}
\newtheorem{lemma}{Lemma}[section]
\newtheorem{cor}[lemma]{Corollary}
\newtheorem{thm}[lemma]{Theorem}
\theoremstyle{definition}
\theoremstyle{remark}
\newtheorem{rem}[lemma]{Remark}
\begin{document}
\title{Clifford theory on rational Cherednik algebras of imprimitive groups }
\author{Shoumin Liu}
\date{}

\maketitle
\begin{abstract}
Ram and Rammage have introduced an automorphism and Clifford theory on affine Hecke algebras. Here we will extend them to cyclotomic  Hecke algebras
and rational Cherednik algebras.
\end{abstract}

\section{Introduction}
Symmetric groups are  important classical groups in representation theory.
Usually, we consider the symmetric group ${\bf S}_n$ as a subgroup in ${\GL}(n,\C_n)$ as those matrices with just $n$ $1$s and $n(n-1)$ $0$s as their  matrix items. If we extend it to those matrices in ${\GL}(n,\C_n)$ with just $n$ nonzero items but those nonzero items  have values of $r$th root of $1$, then we obtain the imprimitive group $G(r,1,n)$. There exists a subgroup  the subgroup $G(r,p,n)$ of $G(r,1,n)$, for those having determinant of $\frac{r}{p}$th root of $1$ for $p\mid r$.\\

In \cite{AK1994} and \cite{A1995}, the representations  of Hecke algebras associated to $G(r,1,n)$
and $G(r,p,n)$ are given; furthermore, In \cite{A1995}, the author gives a shift operator to obtain the
irreducible representation of type $G(r,p,n)$ from type $G(r,1,n)$, which can be considered as an
application of Clifford theory.
In \cite{RR2003}, the Clifford theory on affine Hecke algebras are studied with combinatorial method applied.
Recently, there are some conclusions of rational Cherednik algebras which are closely related to Hecke algebras, especially in \cite{GGOR2003} and \cite{BC2011}. In \cite{S2011}, some connection between the rational Cherednik algebras of type
$G(r,1,n)$ and crystals of Fock spaces are proved. There is a Knizhink-Zamolodchikov functor which builds a bridge between the representations of rational Cherenik algebras and Hecke algebras of the corresponding types. \\
In this paper, definitions and basic properties of the related groups, algebras will be introduced. We will show the analogous conclusion as in \cite{RR2003} for the cyclotomic  Hecke algebra and prove the clifford operator are the inverse of Ariki's shift operator on the irreducible representations of cyclotomic Hecke algebras.  We also extend the clifford operators to the rational Cherednik algebra of type $G(r,1,n)$, and prove that the subalgebra invariant under Clifford operator is the rational Cherednik algebra of type
$G(r,p,n)$. The commutating relations of restriction functor and Knizhnik-Zamolodchikov functor, and the Clifford functor and  Knizhnik-Zamolodchikov functor are proved in this paper.\\
We sketch the paper in the following.  First we recall the  definitions and basic properties about rational Cherednik algebras and cyclotomic  Hecke algebras in Section \ref{sect:defn}. In Section \ref{sect:auto}, we extend the automorphism in \cite{RR2003} to these algebras. Then we present the relation of the automorphism on modules and the shift operator given by Ariki in Section \ref{sect:shift}. In  Section \ref{sect:res and KZ},
we introduce the ${\rm KZ}$ functor and prove the commutating relations of restriction functor and ${\rm KZ}$ functor, and the Clifford functor and  ${\rm KZ}$ functor, then we end this by the classical example of type $\ddB_n$ and type $\ddD_n$.

\section{Rational Cherednik algebras of imprimitive groups}\label{sect:defn}
\subsection{Rational Cherednik algebras}
Let $V$ be an $n$-dimensional Hilbert space and $U(V)$ be the group of
unitary transformation on $V$. Suppose that $W\subset U(V)$
is  a finite \emph{complex reflection group}.
Let $\mathcal{C}$ be the set of $W$-orbits of reflection hyperplanes and let $\mathcal{A}$ be the full hyperplane arrangement,namely $\cup_{C\in \mathcal{C}}C$.
For a hyperplane $H$ in an orbit $C\in \mathcal{C}$, the stabilizer
$$W_{H}: =\{w\in W: wx=x \quad for\, any \quad x\in H \}$$
is isomorphic to a cyclic group $\Z/e_C\Z$. Here, $e_C$ is the order of $W_H$, which is determined
by the orbit $C$.\\
Let $\gamma:\cA\rightarrow \C W$, $H\mapsto \gamma_H$ be a $W$-equivariant map such
that $\gamma_{H}$ is an element of $\C W_H\subset\C W$ with trace zero. Giving $\gamma$ is equivalent to
giving an $W$-invariant function $c$ on all reflections  such that
\begin{eqnarray}\label{gamma}
\gamma_H=\sum_{g\in W_H\setminus\{1\}}c_g\cdot g.
\end{eqnarray}
The Cherednik algebras $H_c(W)$ can  be defined directly, in terms of generators and relations.
The algebra $H_c(W)$ is generated by elements $x\in V^{*}$, $y\in V$ and $w\in W$ subject to
the following relations:
\begin{eqnarray}\label{daharel1}
[x, x']=0,\quad [y,y']=0, \quad wxw^{-1}=w(x), \quad wy w^{-1}=w(y)
\end{eqnarray}
\begin{eqnarray}\label{daharel2}
[y, x]=\left<y, x\right>+
\sum_{H\in \mathcal{A}}\frac{\left<\alpha_H, y\right> \left<x, v_H\right>}{\left<\alpha_H, v_H\right>}\gamma_H
\end{eqnarray}
where $\alpha_H\in V^{*}$(the dual space of $V$ under a $W$-invariant inner product) is the linear equation defining $H$; we take $\alpha_H=v^{*}_H$, where $v_H$ is a norm of $H$.\\
\subsection{Imprimitive groups}
The imprimitive group $G(r,1, n)$ are defined
with generators $\{S_i\}_{i=1}^{n}$ and the following relations,
\begin{enumerate}[(1)]
\item $S_i S_j=S_j S_i$, if $|i-j|>1$,
\item $S_i S_{i+1}S_i= S_{i+1}S_i S_{i+1}$, for $2\leq i\leq n-1$,
\item $S_1 S_2 S_1S_2=S_2 S_1S_2S_1$,
\item $S_1^r=S_i^2=1$, for $2\leq i\leq n$.
\end{enumerate}
The group $G(r,p,n)$ with $p\mid r$ is the subgroup  of index $p$ in $G(r,1, n)$ generated by
\begin{eqnarray*}
s_0=S_1^p, \quad s_1=S_1S_2S_1, \quad s_i=S_i, \quad 2\leq i\leq n.
\end{eqnarray*}
Let $u_1$, $\ldots$, $u_r$, $0\neq q\in\C$. The cyclotomic Hecke algebra $\cH_{r,1,n}(u_1, \ldots, u_r, q)$
is the algebra generated over $\C$ given by generators $\{T_i\}_{i=1}^{n}$ and the following relations,
\begin{enumerate}[(1)]
\item $T_i T_j=T_j T_i$, if $|i-j|>1$,
\item $T_i T_{i+1}T_i= T_{i+1}T_i T_{i+1}$, for $2\leq i\leq n-1$,
\item $T_1 T_2 T_1T_2=T_2 T_1T_2T_1$,
\item $(T_1-u_1)(T_1-u_2)\cdots(T_1-u_r)=0$,
\item    $(T_i-1)(T_i+q)=0$,  for $2\leq i\leq n$.
\end{enumerate}
The algebra $\cH_{r,1,n}(u_1, \ldots, u_r, q)$ is of dimension $r^n n!$ (see \cite{AK1994}).
Let $d=r/p$. Let $\{v_i\}_{i=0}^{d-1}\subset \C$ and $\xi$ be a primitive $p$th root of $1$. for
$1\leq j\leq r$, define
\begin{eqnarray}\label{uvrel}
u_j=\xi^k v_l \quad {\rm if} \, j-1=lp +k, \quad (0\leq k\leq p-1,\, 0\leq l\leq d-1)
\end{eqnarray}
i.e., $\{u_i\}_{i=1}^{r}$ are chosen so that
\begin{eqnarray}\label{t1uvrel}
(T_1-u_1)(T_1-u_2)\cdots(T_1-u_r)=(T_1^p-v_0^p)(T_1^p-v_1^p)\cdots(T_1^p-v_{d-1}^{p})
\end{eqnarray}
The algebra $\cH_{r,p,n}(v_0,\ldots, v_d,q)$ is the subalgebra of $\cH_{r,1,n}(u_1, \ldots, u_r, q)$
generated by the elements
\begin{eqnarray*}
t_0=T_1^p, \quad t_1=T_1^{-1}T_2T_1, \quad t_i=T_i, \quad 2\leq i\leq n.
\end{eqnarray*}
If $\zeta$ is a primitive $r$th root of $1$,
then we have
\begin{eqnarray*}
\cH_{r,1,n}(1,\zeta, \ldots, \zeta^{r-1}, q)\cong \C G(r,1,n).
\end{eqnarray*}
If $\eta$ is a primitive $d$th root of $1$,
then we have
\begin{eqnarray*}
\cH_{r,p,n}(1,\eta, \ldots, \eta^{d-1}, q)\cong \C G(r,p,n).
\end{eqnarray*}
Without the defining relation (4) for $\cH_{r,1,n}(u_1, \ldots, u_r, q)$, the affine Hecke
algebra  is obtained, which we denote  as $\cH_{\infty,1,n}(q)$. To distinguish $\cH_{\infty,1,n}(q)$ from  $\cH_{r,1,n}(u_1, \ldots, u_r, q)$, we replace the generator $T_1$ by $X$, therefore
the generators of  $\cH_{\infty,1,n}(q)$ are $\{X_1\}\cup\{T_i\}_{i=2}^{n}$.
Just as $\cH_{r,p,n}(v_0,\ldots, v_d,q)$ rising from  $\cH_{r,1,n}(u_1, \ldots, u_r, q)$,
we define $\cH_{\infty, p,n}(q)$ is the subalgebra of $\cH_{\infty,1,n}(q)$  with generators
\begin{eqnarray*}
t_0=X^p, \quad t_1=X^{-1}T_2X, \quad t_i=T_i, \quad 2\leq i\leq n.
\end{eqnarray*}
Naturally we have the following commutative diagram, where we denote the quotient map from
$\cH_{\infty,1,n}(q)$ to $\cH_{r,1,n}(u_1, \ldots, u_r, q)$ by $\phi$.\\
\begin{center}
\xymatrix{
\cH_{\infty, p,n}(q)\quad\ar@{^{(}->}[r]\ar@{->>}[d]^{\phi}&\quad\cH_{\infty,1,n}(q)\ar@{->>}[d]^{\phi}\\
\cH_{r,p,n}(v_0,\ldots, v_d,q)\ar@{^{(}->}[r]&\cH_{r,1,n}(u_1, \ldots, u_r, q)
}
\end{center}
\section{Automorphism on $H_c(G(r,1,n))$}\label{sect:auto}
Let's first recall Theorem {\bf 2.2} in \cite{RR2003}.
\begin{thm}Let $\xi$ be a primitive $p$th root of unity.
The algebra automorphism $\tau : \cH_{\infty,1,n}(q)\rightarrow \cH_{\infty,1,n}(q)$ defined by
\begin{eqnarray*}
\tau(X)=\xi X,  \quad \tau(T_i)=T_i, \quad 2\leq i\leq n.
\end{eqnarray*}
gives rise to an action of the group $\Z/p\Z=\left< \tau\right>$ on $\cH_{\infty,1,n}(q)$ by algebra
automorphism and
$$\cH_{\infty,p,n}=(\cH_{\infty,1,n})^{\Z/p\Z}.$$
\end{thm}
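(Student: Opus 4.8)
The plan is to verify, in order, that $\tau$ is a well-defined algebra automorphism, that $\langle\tau\rangle$ is cyclic of order exactly $p$, and that the algebra of $\tau$-invariants is $\cH_{\infty,p,n}(q)$. For the first two I would argue straight from the presentation. Since $\cH_{\infty,1,n}(q)$ is $\cH_{r,1,n}$ with $T_1$ renamed $X$ and relation $(4)$ deleted, it suffices to check that $X\mapsto\xi X$, $T_i\mapsto T_i$ ($2\le i\le n$) preserves relations $(1)$, $(2)$, $(3)$, $(5)$: the braid and quadratic relations among the $T_i$ and the commutations $XT_i=T_iX$ for $i\ge 3$ are untouched because $\tau$ fixes every $T_i$, while in the one remaining relation $XT_2XT_2=T_2XT_2X$ there are exactly two factors $X$ on each side, so $\tau$ multiplies both sides by $\xi^2$ and the relation is preserved. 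Hence $\tau$ is a well-defined endomorphism; $X\mapsto\xi^{-1}X$, $T_i\mapsto T_i$ gives a two-sided inverse by the same check, so $\tau\in\Aut(\cH_{\infty,1,n}(q))$. As $\xi^p=1$ we get $\tau^p=\id$ on generators, hence $\tau^p=\id$; and for $1\le k\le p-1$, $\tau^k(X)=\xi^kX\ne X$ because $X$ is invertible and $\xi$ is a primitive $p$th root of unity, so $\langle\tau\rangle\cong\Z/p\Z$ and we obtain the asserted action.

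For the invariants I would use that $\cH_{\infty,1,n}(q)$ is the extended affine Hecke algebra of type $A_{n-1}$ and invoke its Bernstein--Lusztig presentation: with $X_1=X$ and the usual conjugates $X_2,\dots,X_n$, the $X_i$ commute, $\cA=\C[X_1^{\pm1},\dots,X_n^{\pm1}]$ is a subalgebra, and $\{X^{\mathbf a}T_w:\mathbf a\in\Z^n,\ w\in{\bf S}_n\}$ is a $\C$-basis, where $\{T_w\}$ is the standard basis of the finite Hecke algebra $\cH_0:=\langle T_2,\dots,T_n\rangle$. A short induction from $\tau(T_i)=T_i$ gives $\tau(X_i)=\xi X_i$ for all $i$, so $\tau$ acts diagonally on this basis, $\tau(X^{\mathbf a}T_w)=\xi^{\,a_1+\cdots+a_n}X^{\mathbf a}T_w$. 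Since $\xi$ has exact order $p$ and $\tau$ only rescales basis vectors, an element is $\tau$-fixed exactly when it is supported on those $X^{\mathbf a}T_w$ with $p\mid(a_1+\cdots+a_n)$; writing $\cA_p$ for the span of the monomials $X^{\mathbf a}$ with $p\mid\sum_i a_i$, this says $(\cH_{\infty,1,n}(q))^{\langle\tau\rangle}=\cB:=\cA_p\cdot\cH_0$.

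It remains to show $\cB=\cH_{\infty,p,n}(q)$. One inclusion is immediate: $\cB$ is a subalgebra (fixed points of an automorphism), and $\tau(t_0)=(\xi X)^p=t_0$, $\tau(t_1)=(\xi X)^{-1}T_2(\xi X)=t_1$, $\tau(t_i)=t_i$, so $\cH_{\infty,p,n}(q)\subseteq\cB$. For the reverse inclusion, $\cH_{\infty,p,n}(q)$ already contains $\cH_0=\langle t_2,\dots,t_n\rangle$, so it is enough to prove $\cA_p\subseteq\cH_{\infty,p,n}(q)$; and since the sublattice $\{\mathbf a\in\Z^n:p\mid\sum_ia_i\}$ is generated by $pe_1$ together with the differences $e_i-e_{i+1}$, it suffices to place $X_1^{p}$, $X_1^{-p}$ and the $(X_iX_{i+1}^{-1})^{\pm1}$ inside $\cH_{\infty,p,n}(q)$. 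Here the Bernstein--Lusztig commutation relations between the $T_i$ and the $X_j$ carry the argument: one checks $X_1^{-1}X_2\in\langle t_1,t_2\rangle$ and then, conjugating by the $t_i$ and using that the $X_j$ commute, that every $X^{\mathbf b}$ with $\sum_ib_i=0$ lies in $\cH_{\infty,p,n}(q)$, while $X_1^p=t_0$ and $X_1^{-p}=t_0^{-1}$ (the latter realized through the $p$th power of the length-zero rotation $\pi$ of the extended affine Weyl group, which lies in $\cH_{\infty,p,n}(q)$) take care of the determinant direction. I expect this last step --- verifying that $t_0,\dots,t_n$ actually generate all of $\cB$ rather than merely sitting inside it --- to be the main obstacle, since it is the one place where the lattice bookkeeping must be reconciled with the inhomogeneous straightening relations of the affine Hecke algebra; everything else reduces to direct checks against the given presentations.
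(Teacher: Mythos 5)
Your proposal is correct and follows essentially the same route as the paper: the paper states this theorem as a quotation of Theorem 2.2 of \cite{RR2003} without reproving it, but its proof of the cyclotomic analogue (Theorem \ref{tauonha}) is exactly your argument --- check the defining relations, observe that $\tau$ acts on the Bernstein--Lusztig basis by $\tau(X^{\lambda}T_w)=\xi^{|\lambda|}X^{\lambda}T_w$, and conclude that the fixed subalgebra is the span of the $X^{\lambda}T_w$ with $p\mid|\lambda|$, i.e.\ with $\lambda\in L_p=Q+pL$. The one point where you diverge is the identification of this span with the subalgebra generated by $t_0=X^p$, $t_1=X^{-1}T_2X$, $t_i=T_i$: the paper simply cites Lusztig \cite{L1989} (and \cite{RR2003}) for the fact that $\{X^{\lambda}T_w\mid\lambda\in L_p,\,w\in{\bf S}_n\}$ is a basis of $\cH_{\infty,p,n}(q)$, whereas you attempt to prove the generation statement by hand and, as you yourself flag, leave it as a sketch. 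Your sketch is on the right track --- $X_1^{-1}X_2=X^{-1}T_2XT_2=t_1t_2$ does lie in the subalgebra, the correction terms in the Bernstein straightening relations preserve the coordinate sum $\sum_i a_i$ so the inductive conjugation argument stays inside $\cA_p\cdot\cH_0$, and $L_p=Q+\Z p\epsilon_1$ so $X^{\pm p}=t_0^{\pm1}$ handles the remaining direction --- but to make the proof complete you should either carry this straightening argument through in detail or, as the paper does, invoke the known basis theorem for $\cH_{\infty,p,n}(q)$, which subsumes exactly this step.
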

We apply the argument analogous to \cite{RR2003} to prove the following theorem.
\begin{thm} \label{tauonha}
Let $\xi$ be a primitive $p$th root of unity and $ \cH_{r,1,n}(u_1, \ldots, u_r, q)$  be as defined above with
parameters satisfying the relations (\ref{uvrel}) and (\ref{t1uvrel}).
the map $\tau$ defined on the generators to $ \cH_{r,1,n}(u_1, \ldots, u_r, q)$ as the below,
\begin{eqnarray*}
\tau(T_1)=\xi T_1,  \quad \tau(T_i)=T_i, \quad 2\leq i\leq n.
\end{eqnarray*}
 induces an algebra isomorphism on $ \cH_{r,1,n}(u_1, \ldots, u_r, q)$
and an action of the group $\Z/p\Z=\left< \tau\right>$ on $\cH_{\infty,1,n}(q)$  and
$$\cH_{r,p,n}(v_0,\ldots, v_d,q)=(\cH_{r,1,n}(u_1, \ldots, u_r, q))^{\Z/p\Z}.$$
Furthermore, we have the following commutative diagram, \\
 \xymatrix{
\cH_{\infty, p,n}(q)\quad\ar[r]\ar[d]^{\phi}&\cH_{\infty, p,n}(q)\quad\ar[r]^{\tau}\ar[d]^{\phi}&\quad\cH_{\infty,1,n}(q)\ar[d]^{\phi}\\
\cH_{r,p,n}(v_0,\ldots, v_d,q)\ar[r]&\cH_{r,1,n}(u_1, \ldots, u_r, q)\ar[r]^{\tau}&\cH_{r,1,n}(u_1, \ldots, u_r, q)
}
\end{thm}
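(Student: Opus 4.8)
The plan is to establish the statement in four steps: (i) $\tau$ preserves the defining relations of $\cH_{r,1,n}(u_1,\ldots,u_r,q)$, hence extends to an algebra endomorphism; (ii) $\tau^p=\id$ and $\tau$ has order exactly $p$, giving the $\Z/p\Z$-action; (iii) the invariant subalgebra equals $\cH_{r,p,n}(v_0,\ldots,v_d,q)$; (iv) the diagram commutes. For (i), relations (1), (2) and (5) involve only $T_2,\ldots,T_n$, which $\tau$ fixes, and relation (3) contains $T_1$ exactly twice on each side, so $\tau$ multiplies both sides by $\xi^2$ and preserves it. The only relation sensitive to the hypotheses on the parameters is (4): since $T_1$ commutes with itself, $\tau\bigl(\prod_{j=1}^{r}(T_1-u_j)\bigr)=\prod_{j=1}^{r}(\xi T_1-u_j)=\xi^{r}\prod_{j=1}^{r}\bigl(T_1-\xi^{-1}u_j\bigr)$, and by (\ref{uvrel}) the assignment $u_j\mapsto\xi^{-1}u_j$ permutes the multiset $\{u_1,\ldots,u_r\}$ --- it permutes each block $\{v_l,\xi v_l,\ldots,\xi^{p-1}v_l\}$ cyclically, as $\xi$ is a primitive $p$th root of unity --- so this product again vanishes. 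Hence $\tau$ is a well-defined algebra map; the same computation with $X$ in place of $T_1$ recovers the automorphism of $\cH_{\infty,1,n}(q)$ of the theorem recalled above and shows $\phi\tau=\tau\phi$ on generators.

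For (ii), $\tau^p(T_1)=\xi^pT_1=T_1$ and $\tau^p(T_i)=T_i$, so $\tau^p=\id$; since $\tau^k(T_1)=\xi^kT_1\neq T_1$ for $0<k<p$, the map $\tau$ is invertible (with inverse $\tau^{p-1}$), has order exactly $p$, and $\langle\tau\rangle\cong\Z/p\Z$ acts by algebra automorphisms. The inclusion $\cH_{r,p,n}\subseteq(\cH_{r,1,n})^{\langle\tau\rangle}$ in step (iii) is then immediate: $\tau(t_0)=\xi^pT_1^p=t_0$, $\tau(t_1)=\xi^{-1}T_1^{-1}T_2(\xi T_1)=t_1$, and $\tau(t_i)=t_i$ for $2\leq i\leq n$.

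The reverse inclusion is the crux of the theorem. The cleanest argument averages over $\C$: set $e=\frac1p\sum_{j=0}^{p-1}\tau^j$. Given $a\in(\cH_{r,1,n})^{\langle\tau\rangle}$, lift it to some $\tilde a\in\cH_{\infty,1,n}$ along the surjection $\phi$. Then $e(\tilde a)$ is $\tau$-invariant, hence lies in $\cH_{\infty,p,n}$ by the theorem of \cite{RR2003} quoted above; moreover $\phi$-equivariance of $\tau$ together with $\tau$-invariance of $a$ give $\phi(e(\tilde a))=e(\phi(\tilde a))=e(a)=a$. As $\phi$ carries the generators $t_0,t_1,t_i$ of $\cH_{\infty,p,n}$ onto those of $\cH_{r,p,n}$, we obtain $a=\phi(e(\tilde a))\in\cH_{r,p,n}$. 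Alternatively, in the combinatorial style of \cite{RR2003}, one works with the Ariki--Koike basis $\{L_1^{a_1}\cdots L_n^{a_n}T_w:0\leq a_i\leq r-1,\ w\in\mathbf{S}_n\}$ (see \cite{AK1994}), where $L_1=T_1$ and $L_{k+1}=q^{-1}T_{k+1}L_kT_{k+1}$: each $L_k$ is obtained from $T_1$ by conjugating with $\tau$-fixed elements, so $\tau(L_k)=\xi L_k$ and $\tau$ acts on $L_1^{a_1}\cdots L_n^{a_n}T_w$ by the scalar $\xi^{a_1+\cdots+a_n}$; thus $(\cH_{r,1,n})^{\langle\tau\rangle}$ is the span of the $r^{n}n!/p$ basis vectors with $a_1+\cdots+a_n\equiv0\pmod p$. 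Here (\ref{t1uvrel}) enters: it forces $T_1^{-p}=t_0^{-1}$ to be a polynomial in $t_0=T_1^p$, so that $T_1^{-1}$ --- and hence $t_1=T_1^{-1}T_2T_1$, after rewriting $T_2T_1$ in terms of $L_2$ --- has total $L$-degree $\equiv0\pmod p$; the same holds for $t_0$ and the $t_i$, so $\cH_{r,p,n}$ sits inside this span, and comparison with $\dim\cH_{r,p,n}=r^{n}n!/p$ (Ariki \cite{A1995}) forces equality.

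Finally (iv) is formal: both squares commute by $\phi\tau=\tau\phi$ together with step (iii), which makes $\tau$ the identity on the invariant subalgebras, and $\phi$ carries $\cH_{\infty,p,n}$ into $\cH_{r,p,n}\subseteq\cH_{r,1,n}$. The main obstacle is step (iii): whichever route one chooses, nontrivial external input is required --- the affine theorem of \cite{RR2003} together with surjectivity of $\phi$, or Ariki's dimension formula for $\cH_{r,p,n}$ --- and in the combinatorial route the decisive point is that the parameter constraints (\ref{uvrel})--(\ref{t1uvrel}) are exactly what place $t_1$ in the degree-$0$ component of the $\Z/p\Z$-grading induced by $\tau$.
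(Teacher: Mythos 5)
Your proposal is correct, and its second (combinatorial) route is essentially the paper's own argument: the paper also checks the defining relations (noting that (4) survives because multiplication by $\xi^{-1}$ permutes the $u_j$ thanks to (\ref{uvrel})), and then computes that $\tau$ acts on the Ariki--Koike basis $\{X^{\lambda}T_w\}$ by the scalar $\xi^{|\lambda|}$. The one difference is how the invariant span is identified with $\cH_{r,p,n}$: the paper directly cites Ariki's theorem that $\{X^{\lambda}T_w \mid \lambda\in L_p\cap C_r\}$ is a basis of $\cH_{r,p,n}(v_0,\ldots,v_d,q)$, so the degree-zero eigenspace is literally that subalgebra, whereas you establish only the containment $\cH_{r,p,n}\subseteq(\cH_{r,1,n})^{\langle\tau\rangle}$ by hand (your observation that (\ref{t1uvrel}) puts $T_1^{-1}$, and hence $t_1$, in $L$-degree $\equiv 0 \bmod p$ is the right point, and is not spelled out in the paper) and then close the gap with Ariki's dimension count $r^n n!/p$. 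Your first route, averaging with $e=\frac1p\sum_j\tau^j$ and lifting along the surjection $\phi$ to import the affine-level theorem of Ram--Ramagge, does not appear in the paper at all; it is a clean alternative that trades the cyclotomic basis theorem for surjectivity of $\phi$ and the identity $\phi(\cH_{\infty,p,n})=\cH_{r,p,n}$, and it is arguably more robust since it avoids any dimension count. Both of your routes, like the paper's, ultimately rest on nontrivial external input (Ariki's basis or dimension results, or the affine invariant-subalgebra theorem), which you correctly flag as the crux.
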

\begin{proof}
First, one can verify that all relations in the definition of $\cH_{r,p,n}$ are kept under $\tau$ , especially the relation
(4) in the definition replaced by (\ref{t1uvrel}).\\
In $\cH_{\infty,1,n}(q)$, let $X^{\epsilon_1}=X$ and
$X^{\epsilon_{i}}=T_iX^{\epsilon_{i-1}}T_i$ for $2\leq i\leq n$.
View $\{\epsilon_i\}_{i=1}^{n}$ as a basis of $\R^{n}$. Let
$$L=\sum_{i=1}^{n}\Z\epsilon_i, \quad Q=\sum_{i=2}^{n}\Z(\epsilon_i-\epsilon_{i-1}),\quad L_p=Q+pL. $$
Let $X^{\lambda}=(X^{\epsilon_1})^{\lambda_1}\cdots (X^{\epsilon_n})^{\lambda_n}$
for $\lambda=\sum_{i=1}^{n}\lambda_i\epsilon_i\in L$.
The sets
$$\{X^{\lambda}T_w\mid \lambda\in L,w\in {\bf S}_n\}\quad and \{X^{\lambda}T_w\mid \lambda\in L_p,w\in {\bf S}_n\}$$
are bases of $\cH_{\infty,1,n}(q)$ and $\cH_{\infty, p,n}(q)$, respectively(\cite{L1989}).
Let $C_r=\{\lambda\in L\mid 0\leq \lambda_i<r\}$.
Ariki and Koile \cite{AK1994}, and Arike \cite{A1995} have shown that the sets
$$\{X^{\lambda}T_w\mid \lambda\in C_r,w\in {\bf S}_n\}\quad and\quad \{X^{\lambda}T_w\mid \lambda\in L_p\cap C_r,w\in {\bf S}_n\}$$
are bases of $\cH_{r,1,n}(u_1, \ldots, u_r, q)$ and $\cH_{r,p,n}(v_0,\ldots, v_d,q)$,
 respectively. \\
   Take an element $h\in \cH_{r,1,n}(u_1, \ldots, u_r, q)$, then $h$ can be
 written as $$h=\sum_{\lambda\in C_r, w\in S_{n}}c_{\lambda, w}\phi(X^{\lambda})w,$$ so
 $$\tau(h)=\sum_{\lambda\in C_r, w\in S_{n}}\xi^{|\lambda|}c_{\lambda, w}\phi(X^{\lambda})w,$$
 where $|\lambda|=\sum_{i=1}^{n}\lambda_{i}$. Hence  $\tau(h)=h$ if and only if $h\in \cH_{r,p,n}(v_0,\ldots, v_d,q)$.
\end{proof}
If we replace algebras $\cH_{r,p,n}(v_0,\ldots, v_d,q)$, $(\cH_{r,1,n}(u_1, \ldots, u_r, q))$  by
$\C[G(r,p,n)]$ and $\C[G(r,1,p)]$, respectively, and define the $\tau$-action on generators of
$\C[G(r,1,p)]$ similarly, we can obtain the following corollary.
\begin{cor}\label{tauongroup}
The map $\tau$ induces an algebra isomorphism on $\C[G(r,1,p)]$
and an action of the group $\Z/p\Z=\left< \tau\right>$ on $\C[G(r,1,p)]$.  We have
$$\C[G(r,p,n)]=(\C[G(r,1,p)])^{\Z/p\Z}.$$
\end{cor}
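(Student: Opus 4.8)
The plan is to specialize the proof of Theorem~\ref{tauonha} to the group-algebra setting, where almost everything becomes more elementary. Recall from Section~\ref{sect:defn} that $\cH_{r,1,n}(1,\zeta,\ldots,\zeta^{r-1},q)\cong\C[G(r,1,n)]$ when $\zeta$ is a primitive $r$th root of $1$, and under this isomorphism $T_1$ corresponds to $S_1$ and $T_i$ to $S_i$ for $2\le i\le n$; likewise $\cH_{r,p,n}(1,\eta,\ldots,\eta^{d-1},q)\cong\C[G(r,p,n)]$. So in principle the corollary is the $q$-to-group-algebra shadow of Theorem~\ref{tauonha}. However, because the relation $T_1^r=1$ now genuinely holds (rather than the $r$-term polynomial relation), I would give the argument directly rather than invoke the specialization, both for self-containedness and because the basis facts are now trivial.

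First I would check that the assignment $\tau(S_1)=\xi S_1$, $\tau(S_i)=S_i$ for $2\le i\le n$ extends to an algebra endomorphism of $\C[G(r,1,n)]$. It suffices to verify that the defining relations (1)--(4) of $G(r,1,n)$ are preserved. Relations (1) and (2) involve only $S_2,\ldots,S_n$, which are fixed by $\tau$, so they are immediate. For relation (3), $S_1S_2S_1S_2=S_2S_1S_2S_1$: both sides acquire the factor $\xi^2$ under $\tau$ (two occurrences of $S_1$ on each side), so the relation is preserved. For relation (4), $S_1^r=1$ becomes $\xi^r S_1^r=\xi^r\cdot 1$, and since $\xi$ is a $p$th root of $1$ and $p\mid r$ we have $\xi^r=1$; also $S_i^2=1$ for $i\ge2$ is untouched. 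Hence $\tau$ is a well-defined algebra endomorphism. Since $\tau^p$ fixes every generator, $\tau^p=\id$, so $\tau$ is an automorphism of order dividing $p$, and as $\xi$ has exact order $p$ it has order exactly $p$; this gives the action of $\Z/p\Z=\langle\tau\rangle$.

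Next I would identify the fixed subalgebra. Every element $h\in\C[G(r,1,n)]$ can be written uniquely as $h=\sum_{a,w} c_{a,w}\,S_1^{a}\,w$ where $a$ runs over $\{0,1,\ldots,r-1\}$ and $w$ over a set of coset representatives for $\langle S_1\rangle$ in $G(r,1,n)$ — concretely, using the normal form, over words in $S_2,\ldots,S_n$ together with the conjugates $T_1^{-k}S_jT_1^k$ appearing in the monomials $X^\lambda T_w$ of Theorem~\ref{tauonha}'s proof, but one can be agnostic about the precise combinatorics: what matters is that $\{S_1^a w\}$ forms a $\C$-basis with $0\le a<r$. Then $\tau(h)=\sum_{a,w}\xi^{a}c_{a,w}S_1^a w$, so $\tau(h)=h$ if and only if $c_{a,w}=0$ whenever $\xi^a\neq1$, i.e.\ whenever $p\nmid a$. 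Thus the fixed subalgebra is spanned by the $S_1^a w$ with $a\in\{0,p,2p,\ldots,r-p\}$, which is exactly the span of the monomials in $s_0=S_1^p$ and $s_1,\ldots,s_n$, namely $\C[G(r,p,n)]$. (Equivalently, one may deduce the analogue of the basis statement $\{X^\lambda T_w:\lambda\in L_p\cap C_r\}$ for the group algebra by the same index-$p$ counting, or simply transport it from Theorem~\ref{tauonha}.)

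The only real point requiring care — the ``main obstacle,'' such as it is — is making the decomposition $h=\sum_{a,w}c_{a,w}S_1^a w$ rigorous: one must know that the powers of $S_1$ up to $r-1$ are linearly independent over the subalgebra generated by $S_2,\ldots,S_n$ in the appropriate sense, equivalently that $[\,\C[G(r,1,n)]:\C[G(r,p,n)]\,]=p$ as left modules and that $1,S_1,\ldots,S_1^{p-1}$ is a basis for that extension. This is the group-algebra counterpart of the basis results of Ariki--Koike and Ariki quoted in the proof of Theorem~\ref{tauonha}, and it follows either from those by specializing $q$ and the $u_i$ as above, or directly from the fact that $G(r,p,n)$ has index $p$ in $G(r,1,n)$ with $\{1,S_1,\ldots,S_1^{p-1}\}$ a transversal. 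Once this bookkeeping is in place, the eigenvalue computation $\tau(S_1^a w)=\xi^a S_1^a w$ finishes the proof exactly as in Theorem~\ref{tauonha}.
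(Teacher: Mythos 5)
Your proposal is correct and follows essentially the same route as the paper, which obtains the corollary precisely by repeating the argument of Theorem~\ref{tauonha} with the group algebras in place of the Hecke algebras (relations preserved under $\tau$, a basis on which $\tau$ acts by $\xi^{|\lambda|}$, fixed part spanned by the degree-divisible-by-$p$ elements). Your direct verification via the transversal $\{1,S_1,\dots,S_1^{p-1}\}$ is just a more explicit, self-contained rendering of the same specialization.
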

Here we want to extend the $\tau$ on to Rational Cherednik algebra associated to $G(r,1,n)$, but we need to give restriction on the $\gamma$-function on reflections. we require $c_g=0$ in (\ref{gamma}) if
$g\in G(r,1,n)\setminus G(r,p,n)$.
\begin{thm}\label{invDAHA}
Let $\tau$ act on $V$  as  the identity   and  on  $\C[G(r,1,p)]$
as in Corollary \ref{tauongroup}.
Then $\tau$ induces an algebra isomorphism on $H_c(G(r,1,p))$
and an action of the group $\Z/p\Z=\left< \tau\right>$ on $\C[G(r,1,p)]$.  We have
$$H_c(G(r,p,n))=(H_c(G(r,1,p)))^{\Z/p\Z}.$$
\end{thm}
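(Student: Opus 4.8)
The strategy is to push the statement through the Poincaré--Birkhoff--Witt (PBW) decomposition of the rational Cherednik algebra, reducing it to the group-algebra computation already carried out in Corollary \ref{tauongroup}. Recall from \cite{GGOR2003} that multiplication furnishes a vector-space isomorphism $H_c(W)\cong\C[V]\otimes\C W\otimes\C[V^*]$; in particular, after fixing bases $x_1,\dots,x_n$ of $V^*$ and $y_1,\dots,y_n$ of $V$, the products $x^{a}\,w\,y^{b}$ --- with $x^{a}$ a monomial in the $x_i$, $y^{b}$ a monomial in the $y_j$ and $w\in G(r,1,n)$ --- form a $\C$-basis of $H_c(G(r,1,n))$.

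First I would verify that $\tau$, defined by $\tau|_{V^*}=\id$, $\tau|_V=\id$ and $\tau|_{\C[G(r,1,n)]}$ as in Corollary \ref{tauongroup}, respects the defining relations (\ref{daharel1}) and (\ref{daharel2}), so that it descends to an algebra endomorphism of $H_c(G(r,1,n))$. The relations $[x,x']=0$ and $[y,y']=0$ are clear. For a group element $w\in G(r,1,n)$ one has $\tau(w)=\mu_w w$ for a scalar $\mu_w$, a $p$th root of unity (immediate from $\tau(S_1)=\xi S_1$ and $\tau(S_i)=S_i$), and by Corollary \ref{tauongroup} $\mu_w=1$ precisely when $w\in G(r,p,n)$. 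Since scalars are central and $\tau$ fixes $V^*$, $\tau(wxw^{-1})=\tau(w)\,x\,\tau(w)^{-1}=(\mu_w w)\,x\,(\mu_w w)^{-1}=wxw^{-1}=w(x)=\tau(w(x))$, and likewise for $y$, so the relations $wxw^{-1}=w(x)$, $wyw^{-1}=w(y)$ are preserved. Finally, under $\tau$ the scalar coefficients in (\ref{daharel2}) are unchanged and $\gamma_H=\sum_{g\in W_H\setminus\{1\}}c_g g$ becomes $\sum_g c_g\mu_g g$; the hypothesis $c_g=0$ for $g\notin G(r,p,n)$ together with $\mu_g=1$ on $G(r,p,n)$ forces $\tau(\gamma_H)=\gamma_H$, so (\ref{daharel2}) is preserved. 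Since $\tau^p=\id$ on the generators, $\tau$ is an automorphism and $\langle\tau\rangle\cong\Z/p\Z$ acts on $H_c(G(r,1,n))$.

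Next I would read off the invariants from the PBW basis. Because $\tau$ fixes every $x_i$ and $y_j$ and scales $w$ by $\mu_w$, it acts diagonally: $\tau(x^{a}w y^{b})=\mu_w\,x^{a}w y^{b}$. Hence $H_c(G(r,1,n))^{\langle\tau\rangle}$ is the span of those basis vectors $x^{a}w y^{b}$ with $\mu_w=1$, i.e. (Corollary \ref{tauongroup}) with $w\in G(r,p,n)$, which by the PBW isomorphism is exactly $\C[V]\cdot\C[G(r,p,n)]\cdot\C[V^*]$ inside $H_c(G(r,1,n))$.

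It remains to identify this subalgebra with $H_c(G(r,p,n))$, and this is the step I expect to be the main obstacle. I would use the natural map $H_c(G(r,p,n))\to H_c(G(r,1,n))$ that is the identity on $V^*$ and $V$ and sends $w\mapsto w$ for $w\in G(r,p,n)$; it is an algebra homomorphism provided the defining relations of $H_c(G(r,p,n))$ hold in $H_c(G(r,1,n))$, and only (\ref{daharel2}) requires care. Here one compares the sum over the hyperplane arrangement of $G(r,p,n)$ with the sum over that of $G(r,1,n)$: for a hyperplane $H$ of $G(r,p,n)$ one has $(G(r,p,n))_H=(G(r,1,n))_H\cap G(r,p,n)$, and since $c_g=0$ off $G(r,p,n)$ the two versions of $\gamma_H$ are literally equal; for a hyperplane of $G(r,1,n)$ that is not a hyperplane of $G(r,p,n)$ --- which occurs only when $p=r$, namely the coordinate hyperplanes --- every nontrivial element of its stabilizer lies outside $G(r,p,n)$, so $\gamma_H=0$ and that term drops out. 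Granting this, the map is a homomorphism, and by PBW applied to $H_c(G(r,p,n))$ it carries the basis $\{x^{a}w y^{b}:w\in G(r,p,n)\}$ into the PBW basis of $H_c(G(r,1,n))$, hence is injective with image $\C[V]\cdot\C[G(r,p,n)]\cdot\C[V^*]=H_c(G(r,1,n))^{\langle\tau\rangle}$, giving $H_c(G(r,p,n))=(H_c(G(r,1,n)))^{\Z/p\Z}$. The delicate point throughout is precisely that imposing $c_g=0$ for $g\notin G(r,p,n)$ is exactly what makes the two incarnations of relation (\ref{daharel2}) compatible; everything else is a formal consequence of the PBW theorem and of Corollary \ref{tauongroup}.
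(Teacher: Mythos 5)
Your proof is correct and follows essentially the same route as the paper's: check that $\tau$ preserves the defining relations (\ref{daharel1}) and (\ref{daharel2}) (using the hypothesis $c_g=0$ off $G(r,p,n)$), then read off the invariants from the Etingof--Ginzburg PBW decomposition $H_c(G(r,1,n))\cong\C[V]\otimes\C[V^*]\otimes\C[G(r,1,n)]$, on whose basis $\tau$ acts diagonally with eigenvalue $1$ exactly on the $G(r,p,n)$-part. Your final step --- identifying the invariant subalgebra with the intrinsically defined $H_c(G(r,p,n))$ by comparing the two hyperplane arrangements --- is a detail the paper leaves implicit, but it is the same argument in spirit and you fill it in correctly.
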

\begin{proof} First, it can be verified easily that the relations (\ref{daharel1}) and (\ref{daharel2}) for defining $H_c(G(r,1,p))$ still hold as identities under the action of $\tau$, therefore it induces
an algebra isomorphism on  $H_c(G(r,1,p))$ with order $p$. The second claim follows from the same argument
as in Theorem \ref{tauonha} and the PBW basis given in \cite{EG2002}, where it was proved that
$H_c(G(r,1,p))\cong \C[V]\otimes C[V^{*}]\otimes \C[G(r,1,p)]$.

\end{proof}
\section{Ariki's shift operation and $\tau$}\label{sect:shift}

Let $R$ be an algebra and $G$ be a finite group acting by automorphism on $R$.\\
Let $N$ be a (finite dimensional) left $R$-module.  For each $g\in G$ define an $R$-module $^{g}N$,
which has the same underlying vector space $N$ but such that
$^{g}N$ has $R$-action given by
$$r\circ n=g^{-1}(r)n,$$
for $r\in R$, $n\in N$. Therefore $^{g}N$ is simple if and only if $N$ is simple. Thus there is an action
of $g$ on the set of simple $R$-module. \\
Let's describe the Ariki's shift operation in \cite{A1995}.\\
A Young diagram is a finite subset $\lambda$ of $\N\times \N$ which has the property that if
$(k,l)\in \lambda$, then $(k',l')\in \lambda$ for all $k'\leq k$, $l'\leq l$. The elements
of $\lambda$ are called cells of $\lambda$. We say that $(k,l)$ is the cell located in the
$k$-th row and the $l$-th column. \\
Let $\lambda=(\lambda_{j}^{i})_{0\leq i\leq p, 1\leq j\leq d}$ be an $r$-tuple of Young diagrams of size
$\sum_{i,j}|\lambda_{j}^{i}|=n$. A standard tableau $\mathbb{T}=(T_j^i)$ of shape $\lambda$ is , by definition, a bijection from $\{1,\dots,n\}$ to the set of cells of $\lambda$ such that if two
numbers $a<b$ are mapped to adjoint cells, $(k,l)$ and $(k',l')$, respectively, then  $k< k'$ or $l< l'$ holds. If $1\leq y\leq n$ is located in  $\mathbb{T}$, then we write $a(\mathbb{T},y)=i$ and $b(\mathbb{T}, y)=j$ if
$a$ is located in a cell of $\lambda^{i}_{j}$. The content of $y$ in $\mathbb{T}$ is defined by
$c(\mathbb{T},y)=l-k$ if $y$ is located in the $k$th row and $l$th column of $\lambda^{a(\mathbb{T},y)}_{b(\mathbb{T}, y)}$.\\
We define a shift operator on the set of $r$-tuple of Young diagrams or on the set of standard
tableaux by;
\begin{eqnarray}
\rm{shift}(\lambda)&=&\left(\lambda_j^{i+1}\right)\\
\rm{shift}(\mathbb{T})&=&\left(T_j^{i+1}\right)
\end{eqnarray}
for $\lambda=(\lambda_{j}^{i})$ and $\mathbb{T}=(T_j^i)$ respectively. Here we regard $i$ as $i$ modulo $p$.

\begin{thm} Suppose   that the algebra $\cH_{r,1,n}(u_1, \ldots, u_r, q)$ is semisimple and  satisfies the relation
(\ref{t1uvrel}) and $v_{i}^{p}\neq v_{j}^{p}$ for $i\neq j$. Up to some ordering of $u_i$, we have that
$$\tau \circ {\rm shift}=1$$
on the simple $\cH_{r,1,n}(u_1, \ldots, u_r, q)$-modules.
\end{thm}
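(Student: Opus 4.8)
The plan is to use the classification of simple modules of $\cH_{r,1,n}(u_1,\ldots,u_r,q)$ in the semisimple case by $r$-tuples of Young diagrams (Ariki--Koike), and to compare the action of $\tau$ on these modules—via the twisting construction $N\mapsto {}^{\tau}N$ recalled at the beginning of Section \ref{sect:shift}—with Ariki's shift operator. First I would recall explicitly the seminormal form: for a standard tableau $\mathbb{T}$ of shape $\lambda$, the basis vector $v_{\mathbb{T}}$ is a simultaneous eigenvector for the Jucys--Murphy-type elements, and in particular $T_1$ (equivalently $X$ in the affine picture) acts on $v_{\mathbb{T}}$ by the scalar $u_{j(\mathbb{T})}\, q^{c(\mathbb{T},1)}$, where the index $j(\mathbb{T})$ records which component of $\lambda$ contains the cell labelled $1$, and $c(\mathbb{T},1)$ is its content. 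Under the relation (\ref{uvrel}), writing $j-1 = \ell p + k$, this eigenvalue is $\xi^{k} v_{\ell}\, q^{c(\mathbb{T},1)}$, so the component label $j$ decomposes into a "root-of-unity part" $k \in \Z/p\Z$ and a "block part" $\ell \in \Z/d\Z$.

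The key computation is then: on the twisted module ${}^{\tau}N_{\lambda}$, the element $T_1$ acts by $\tau^{-1}(T_1) = \xi^{-1}T_1$, hence its eigenvalue on $v_{\mathbb{T}}$ becomes $\xi^{-1}\xi^{k}v_{\ell}q^{c(\mathbb{T},1)} = \xi^{k-1}v_{\ell}q^{c(\mathbb{T},1)}$. Since all other generators $T_2,\ldots,T_n$ are fixed by $\tau$, the twisted module is again a seminormal-form module, and by matching eigenvalues of the full commuting family of Jucys--Murphy elements (not just $T_1$) we identify ${}^{\tau}N_{\lambda}$ with the module $N_{\mu}$ whose $r$-tuple $\mu$ is obtained from $\lambda$ by sending the component indexed by $(k,\ell)$ to the component indexed by $(k-1,\ell)$; that is, $\mu = \mathrm{shift}(\lambda)$ under the convention that $i$ is read modulo $p$, after the ordering of the $u_i$ is fixed as in (\ref{uvrel}). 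Here the hypothesis $v_i^{p}\neq v_j^{p}$ for $i\neq j$ is exactly what guarantees that the block parts $\ell$ are preserved (the $p$ scalars $\xi^{k}v_\ell$, $0\le k\le p-1$, in block $\ell$ are disjoint from those in block $\ell'$), so the shift genuinely acts only on the root-of-unity coordinate. Semisimplicity ensures every simple module is of seminormal form and that eigenvalue data determines the module up to isomorphism.

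Finally, $\tau$ on simple modules is the operation $N\mapsto {}^{\tau}N$ (the action of the generator of $\Z/p\Z$ on the set of simples, as in Section \ref{sect:shift}), and we have just shown ${}^{\tau}N_{\lambda}\cong N_{\mathrm{shift}(\lambda)}$. Thus $\tau$ realizes precisely the bijection $\mathrm{shift}$ on the indexing set, and since $\mathrm{shift}$ has order $p$ with $\mathrm{shift}$ and $\tau$ being mutually inverse operations on the level of shifting $i \mapsto i+1$ versus $i \mapsto i-1$, we get $\tau\circ\mathrm{shift} = 1$ on simple $\cH_{r,1,n}(u_1,\ldots,u_r,q)$-modules.

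The main obstacle I expect is the bookkeeping in the second step: one must be careful that twisting by $\tau$ really lands inside the same algebra's module category with the seminormal basis intact, and that the comparison of \emph{all} Jucys--Murphy eigenvalues—not merely that of $T_1$—forces the permutation of components to be the clean shift $(k,\ell)\mapsto(k-1,\ell)$ rather than some more complicated relabelling. This is where the disjointness hypothesis $v_i^p \neq v_j^p$ and the precise ordering (\ref{uvrel}) of the parameters are essential, and getting the modular arithmetic on the index $i$ consistent between $\tau$ (multiplication by $\xi^{-1}$, i.e. $k \mapsto k-1$) and $\mathrm{shift}$ (defined as $i \mapsto i+1$) is what pins down the direction and yields the identity rather than merely "$\tau\circ\mathrm{shift}$ has order dividing $p$."
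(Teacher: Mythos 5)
Your proposal follows essentially the same route as the paper: both use the Ariki--Koike seminormal construction, observe that $\tau$ fixes $T_2,\dots,T_n$ so only the $T_1$-eigenvalue $\xi^{a}v_{b}$ on $v(\mathbb{T})$ matters, compute that twisting by $\tau$ replaces it by $\xi^{a-1}v_{b}$ (your extra factor $q^{c(\mathbb{T},1)}$ is harmless since the cell labelled $1$ has content $0$), and conclude that $\tau$ decrements the superscript while $\mathrm{shift}$ increments it, so the composite is the identity. The one slip is your intermediate identification ``${}^{\tau}N_{\lambda}\cong N_{\mathrm{shift}(\lambda)}$,'' which contradicts your own (correct) closing claim that $\tau$ acts by $i\mapsto i-1$ and is inverse to $\mathrm{shift}$: the twisted module is indexed by the multipartition with superscripts decremented, i.e.\ by $\mathrm{shift}^{-1}(\lambda)$, exactly as the paper records with $\tau(\lambda)=\left(\lambda_j^{i-1}\right)$.
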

\begin{proof}First, we need to recall the defining action on the $r$-tuple Young diagram from \cite{AK1994}. \\
Set $\mathcal{T}(\lambda)=\{\text{standard tableaux of shape } \lambda \}.$ We define symbols,
$\{v(\mathbb{T})\mid \mathbb{T}\in \mathcal{T}(\lambda)\}$. Let $V(\lambda)$ is the $\C$-vector space with
these symbols as its basis.
Since our action $\tau$ on
$\cH_{r,1,n}(u_1, \ldots, u_r, q)$  and the action of $\cH_{r,1,n}(u_1, \ldots, u_r, q)$  on
 $V(\lambda)$  are  defined on generators $\{T_i\}_{i=1}^{n}$,  and $\tau$ acts trivially on
 $\{T_i\}_{i=2}^{n}$, hence we just need to study them on $T_1$.\\
 It was defined as that
 \begin{eqnarray*}
 T_1v(\mathbb{T})=\xi^{a}v_{b}v(\mathbb{T}),\quad where\quad a=a(\mathbb{T},1),\quad b=b(\mathbb{T},1),
\end{eqnarray*}
therefore we see that
 $$T_1\tau(v(\mathbb{T}))=\tau^{-1}(T_1)v(\mathbb{T})$$
 $$=\xi^{-1}\xi^{a}v_{b}v(\mathbb{T})=\xi^{a-1}v_{b}v(\mathbb{T}).$$
 Then it can be verified that (Here $i\equiv i$ ($\mod$ $p$).)
 \begin{eqnarray}
\rm{\tau}(\lambda)&=&\left(\lambda_j^{i-1}\right)\\
\rm{\tau}(\mathbb{T})&=&\left(T_j^{i-1}\right)
\end{eqnarray}
 Which implies  that $\tau$ moves all Young diagram backward for index $i$,
 but the ${\rm shift}$ moves  all Young diagram forward for index $i$,
 so $\tau \circ {\rm shift}=1$.
\end{proof}
If there exists a $k$ such that ${\rm shift}^k(\lambda)=\mu$, we say that
$\lambda$ and $\mu$ are cyclic equivalent. We denote the cyclic equivalent class of $\lambda$ by
$\bar{\lambda}$. Let $e_\lambda=p/|\bar{\lambda}|$ and $G_\lambda$ be the stabilizer of
$\lambda$ in  $G=\left<\tau\right>=\left<{\rm shift}\right>\simeq \Z/p\Z$. We see that
$$G_\lambda=\left<\tau^{\frac{p}{e_\lambda}} \right>\simeq Z/e_\lambda\Z.$$
In \cite{A1995}, all the irreducible representation of $\cH_{r,p,n}(v_0,\ldots, v_d,q)$ are represented by
$V(\bar{\lambda}, l)$, $0\leq l\leq e_{\lambda}$. Let $\alpha_\lambda$ be function defined from
$G_{\lambda}\times G_{\lambda}$ to $\C$ and $(\C G_{\lambda})_{\alpha_{\lambda}^{-1}}$ be  defined as in
the Appendix in \cite{RR2003}.  \\
By \cite[Lemma 2.4]{A1995} and  \cite[Theorem A.6, Theorem A.13]{RR2003},
we can get the following two theorems.
\begin{thm}Let $\pi$ be a simple module (linear character)   of $(\C G_{\lambda})_{\alpha_{\lambda}^{-1}}$.
\\For the $\cH_{r,1,n}(u_1, \ldots, u_r, q)\rtimes G_{\lambda}$-module $V(\lambda)\otimes \pi$, we define that
$$RG^{V(\lambda), \pi}={\rm Ind}_{\cH_{r,1,n}(u_1, \ldots, u_r, q)\rtimes G_{\lambda}}^{\cH_{r,1,n}(u_1, \ldots, u_r, q)\rtimes G}(V(\lambda)\otimes\pi).$$
Then each simple $\cH_{r,1,n}(u_1, \ldots, u_r, q)\rtimes G$-module are obtained under this construction, and
$RG^{V(\lambda), \pi}\cong RG^{V(\lambda^{'}), \pi^{'}}$ if and only if $\lambda$ and $\lambda^{'}$ are cyclic equivalent and $\pi\cong\pi^{'}$.
 \end{thm}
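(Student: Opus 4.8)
The plan is to imitate the classical Clifford-theory correspondence for a finite group acting on a semisimple algebra, as set up in the Appendix of \cite{RR2003}, now applied to $R=\cH_{r,1,n}(u_1,\ldots,u_r,q)$ with $G=\langle\tau\rangle\cong\Z/p\Z$ acting by the automorphism of Theorem \ref{tauonha}. The starting point is that, under the semisimplicity and separation hypotheses $v_i^p\neq v_j^p$, the simple $R$-modules are exactly the $V(\lambda)$ indexed by $r$-tuples $\lambda$ of Young diagrams of total size $n$, and by the previous theorem the $G$-action on this set of simples is precisely the cyclic shift, so the $G$-orbits are the cyclic equivalence classes $\bar\lambda$ with stabilizer $G_\lambda=\langle\tau^{p/e_\lambda}\rangle\cong\Z/e_\lambda\Z$.

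First I would invoke the general machinery of \cite[Appendix]{RR2003}: for a simple $R$-module $V(\lambda)$ with stabilizer $G_\lambda$, the automorphisms $\tau^{k}$ for $\tau^k\in G_\lambda$ give isomorphisms $V(\lambda)\xrightarrow{\sim}{}^{\tau^k}V(\lambda)$, and choosing such intertwiners one obtains a $2$-cocycle-type obstruction $\alpha_\lambda: G_\lambda\times G_\lambda\to\C^{\times}$, well defined up to coboundary; this is the $\alpha_\lambda$ referenced just before the statement. The simple modules of the crossed product $R\rtimes G_\lambda$ lying over $V(\lambda)$ are then $V(\lambda)\otimes\pi$ where $\pi$ runs over the simple modules of the twisted group algebra $(\C G_\lambda)_{\alpha_\lambda^{-1}}$ — and since $G_\lambda$ is cyclic, every such $\pi$ is one-dimensional, i.e.\ a linear character of the twisted algebra, which is why the statement can say ``linear character.'' Next I would apply Clifford's theorem in the form of \cite[Theorem A.6, Theorem A.13]{RR2003}: induction ${\rm Ind}_{R\rtimes G_\lambda}^{R\rtimes G}$ sends the simple $R\rtimes G_\lambda$-module $V(\lambda)\otimes\pi$ to a simple $R\rtimes G$-module $RG^{V(\lambda),\pi}$, every simple $R\rtimes G$-module arises this way, and two such inductions are isomorphic exactly when the data agree up to the $G$-action, i.e.\ when $\lambda,\lambda'$ are $G$-conjugate (cyclic equivalent) and the corresponding characters match. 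The one extra ingredient needed to name the objects concretely as the $V(\bar\lambda,l)$ of \cite{A1995} is \cite[Lemma 2.4]{A1995}, which identifies Ariki's parametrization $V(\bar\lambda,l)$, $0\le l\le e_\lambda$, of the constituents with the choices of linear character $\pi$ of $(\C G_\lambda)_{\alpha_\lambda^{-1}}$; this is what lets one transfer the abstract Clifford-theoretic count to the explicit tableau combinatorics.

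In more detail, the steps in order: (1) record that $R$ semisimple plus the $G$-action give a well-defined permutation action of $G$ on $\mathrm{Irr}(R)$, already identified with the shift on $\{V(\lambda)\}$; (2) for each orbit representative $\lambda$, set up the intertwiners and the twisted group algebra $(\C G_\lambda)_{\alpha_\lambda^{-1}}$ as in the Appendix of \cite{RR2003}; (3) classify $\mathrm{Irr}(R\rtimes G_\lambda)$ over $V(\lambda)$ as $\{V(\lambda)\otimes\pi\}$, noting one-dimensionality of $\pi$ from cyclicity of $G_\lambda$; (4) apply \cite[Theorem A.6]{RR2003} to see $RG^{V(\lambda),\pi}$ is simple and these exhaust $\mathrm{Irr}(R\rtimes G)$; (5) apply \cite[Theorem A.13]{RR2003} for the isomorphism criterion, translating ``$G$-conjugate'' into ``cyclic equivalent''; (6) use \cite[Lemma 2.4]{A1995} to match $\pi$ with Ariki's index $l$, so that $RG^{V(\lambda),\pi}$ is literally $V(\bar\lambda,l)$ up to relabelling. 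The verification that the parameters $u_i$ can be ordered so the combinatorics of $\tau$ matches the shift exactly was done in the preceding theorem, so that ``up to some ordering of $u_i$'' is already in hand.

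The main obstacle I anticipate is purely bookkeeping rather than conceptual: making sure the cocycle $\alpha_\lambda$ coming from the abstract intertwiner construction of \cite[Appendix]{RR2003} is the \emph{same} (up to coboundary, hence giving the same twisted group algebra) as the one implicit in Ariki's explicit construction of $V(\bar\lambda,l)$ in \cite{A1995}, so that step (6) is legitimate and the counts $|\mathrm{Irr}((\C G_\lambda)_{\alpha_\lambda^{-1}})|=e_\lambda$ agree with Ariki's range $0\le l\le e_\lambda$; since $G_\lambda\cong\Z/e_\lambda\Z$ is cyclic, $H^2(G_\lambda,\C^\times)=0$, so in fact $\alpha_\lambda$ is always a coboundary and $(\C G_\lambda)_{\alpha_\lambda^{-1}}\cong\C G_\lambda$ has exactly $e_\lambda$ linear characters — this is the clean way to dispatch the obstruction, and I would phrase the argument so as to use that vanishing explicitly. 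Everything else is a direct citation of \cite[Lemma 2.4]{A1995} and \cite[Theorems A.6, A.13]{RR2003}.
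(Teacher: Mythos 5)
Your proposal is correct and follows essentially the same route as the paper, which offers no written proof beyond the remark that the theorem follows from \cite[Lemma 2.4]{A1995} and \cite[Theorems A.6, A.13]{RR2003}; your steps (1)--(6) are exactly the intended unpacking of those citations, with the identification of the $\tau$-action on $\mathrm{Irr}(\cH_{r,1,n})$ with Ariki's shift supplied by the preceding theorem. Your added observation that $H^2(G_\lambda,\C^\times)=0$ for cyclic $G_\lambda$, so that $(\C G_\lambda)_{\alpha_\lambda^{-1}}\cong\C G_\lambda$ and the $\pi$ are genuinely linear characters, is a worthwhile clarification the paper leaves implicit.
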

 \begin{thm} Take $V(\lambda)$ as a module of $\cH_{r,1,n}(u_1, \ldots, u_r, q)\rtimes G_{\lambda}$.
 We have a decomposition as the following.
 $$V(\lambda)=\oplus_{l=1}^{e_{\lambda}}V(\bar\lambda,l)\otimes (\pi_l)^{*},$$
 as $\cH_{r,p,n}(v_0,\ldots, v_d,q)\rtimes G_{\lambda}$-{mod},
 where $\pi_l$ is a  simple $(\C G_{\lambda})_{\alpha_{\lambda}}$-module,
  and $(\pi_l)^{*}$ is the dual of the simple   $(\C G_{\lambda})_{\alpha_{\lambda}^{-1}}$-module $\pi_l$,
 and $V(\bar\lambda,l)$ is the standard irreducible module of $\cH_{r,p,n}(v_0,\ldots, v_d,q)$ in
 \cite{A1995}.
 \end{thm}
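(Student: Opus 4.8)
The plan is to exhibit the pair $\bigl(\cH_{r,p,n}(v_0,\ldots,v_d,q),\ \cH_{r,1,n}(u_1,\ldots,u_r,q)\bigr)$ as an instance of the Clifford-theoretic setup in the Appendix of \cite{RR2003}, and then to match the decomposition it produces with Ariki's combinatorial construction in \cite{A1995}. Write $R=\cH_{r,1,n}(u_1,\ldots,u_r,q)$ and $S=\cH_{r,p,n}(v_0,\ldots,v_d,q)$. By Theorem \ref{tauonha} the group $G=\Z/p\Z=\langle\tau\rangle$ acts on $R$ with $R^{G}=S$, and the eigenspace decomposition $R=\bigoplus_{i\in\Z/p\Z}R_i$, $R_i=\{h\in R:\tau(h)=\xi^{i}h\}$, makes $R$ a $\Z/p\Z$-graded algebra with $R_0=S$; from the Ariki--Koike basis recalled in the proof of Theorem \ref{tauonha}, graded by $|\lambda|\bmod p$, one checks that $R$ is free of rank $p$ over $S$, so that $(S\subseteq R,G)$ (under the running semisimplicity hypothesis on $R$) falls under the situation treated in \cite[Appendix]{RR2003}. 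The induced action of $G$ permutes the simple $R$-modules, and for a standard module $V(\lambda)$ the $T_1$-computation in the proof of the theorem on $\tau\circ\mathrm{shift}$ gives ${}^{\tau}V(\lambda)\cong V(\mathrm{shift}^{-1}(\lambda))$; hence this $G$-action is precisely the shift on $r$-tuples of Young diagrams, $V(\lambda)$ is $G_\lambda$-stable, and $G_\lambda=\langle\tau^{p/e_\lambda}\rangle\cong\Z/e_\lambda\Z$ is its stabiliser.

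Next I would apply the Clifford decomposition. Fixing intertwiners $V(\lambda)\cong{}^{g}V(\lambda)$ for $g\in G_\lambda$ endows $V(\lambda)$ with the structure of a module over the $\alpha_\lambda$-twisted crossed product of $R$ by $G_\lambda$, where $\alpha_\lambda\colon G_\lambda\times G_\lambda\to\C^{\times}$ is the resulting $2$-cocycle; by \cite[Lemma 2.4]{A1995} these intertwiners may be taken to be Ariki's, so that this is the $\cH_{r,1,n}\rtimes G_\lambda$-structure of the statement. Then \cite[Theorem A.13]{RR2003} yields
$$V(\lambda)=\bigoplus_{l}V(\bar\lambda,l)\otimes(\pi_l)^{*}$$
as $S\rtimes G_\lambda$-modules, where $\pi_l$ ranges over the simple $(\C G_\lambda)_{\alpha_\lambda}$-modules, $(\pi_l)^{*}$ is the dual simple $(\C G_\lambda)_{\alpha_\lambda^{-1}}$-module, and $V(\bar\lambda,l)=\Hom_{(\C G_\lambda)_{\alpha_\lambda}}(\pi_l,V(\lambda))$ is a simple $S$-module. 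Since $G_\lambda$ is cyclic of order $e_\lambda$, every $2$-cocycle on it is a coboundary, so $(\C G_\lambda)_{\alpha_\lambda}\cong\C[\Z/e_\lambda\Z]$ has exactly $e_\lambda$ pairwise non-isomorphic one-dimensional simple modules, which accounts for the index set. Finally I would identify each $V(\bar\lambda,l)$ with the standard irreducible $\cH_{r,p,n}$-module of \cite{A1995} bearing the same name: restricting the $R$-action on $V(\lambda)$ to the generators $t_0=T_1^{p}$, $t_1$, $t_i$ of $S$ and applying the $G_\lambda$-isotypic projections reproduces, via \cite[Lemma 2.4]{A1995} and the explicit $T_1$-action on the tableau basis, exactly Ariki's construction on the shifted standard tableaux.

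The genuine obstacle is this last identification: proving that the $\cH_{r,p,n}$-module extracted abstractly as a $G_\lambda$-isotypic component of $V(\lambda)$ coincides, as an $\cH_{r,p,n}$-module, with the one that \cite{A1995} builds directly from the shifted tableaux — that is, that the representation-theoretic restriction and Ariki's combinatorial restriction agree, compatibly with the twisted $G_\lambda$-action. The remaining ingredients (the grading structure and freeness of $R$ over $S$, and the cyclicity argument giving $e_\lambda$ summands) are routine, whereas this comparison is where the semisimplicity hypothesis and the precise normalisation of the intertwiners in \cite[Lemma 2.4]{A1995} are really used.
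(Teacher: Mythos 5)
Your proposal follows essentially the same route as the paper, which gives no argument beyond the citation of \cite[Lemma 2.4]{A1995} and \cite[Theorems A.6, A.13]{RR2003}: you instantiate the Clifford setup of the Appendix of \cite{RR2003} for the pair $(\cH_{r,p,n}(v_0,\ldots,v_d,q),\ \cH_{r,1,n}(u_1,\ldots,u_r,q))$ with the $\Z/p\Z$-action of Theorem \ref{tauonha} and then apply Theorem A.13. Your write-up is in fact more detailed than the paper's, and correctly isolates the one genuinely nontrivial step, namely matching the abstract $G_\lambda$-isotypic components with Ariki's combinatorially constructed modules $V(\bar\lambda,l)$ via \cite[Lemma 2.4]{A1995}.
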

 \section{Restriction functor and KZ functor}\label{sect:res and KZ}
 \subsection{Category $\cO$ and ${\rm KZ}$ functor}
 The category $\cO$ of $H_c(W)$ is the full subcategory $\cO_c(W)$ of the category $H_c(W)$-modules consisting of objects  that are finitely generated as $\C[V]$-modules and $V$-locally nilpotent. Let's recall more conclusions about $\cO_c(W)$ from  \cite{GGOR2003}. \\
 The category $\cO_c(W)$ is a highest weight category(\cite{CPS1988}) and artinian. Let ${\rm Irr}(W)$
be the  isomorphism classes of irreducible representations of $W$. For $\sigma\in {\rm Irr}(W)$, equip it with a $\C W\ltimes \C[V^{*}]$-module structure by allowing the element in $V\in C[V^{*}]$ act by zero,
the standard module corresponding to $\sigma$ is
$$\Delta(\sigma)=H_c(W)\otimes_{\C W\ltimes \C[V^{*}]}\sigma.$$
It is an indecomposable module with a simple head $L(\sigma)$.  Both $\{L(\sigma)\}_{\sigma\in {\rm Irr}(W)}$ and $\{\Delta(\sigma)\}_{\sigma\in {\rm Irr}(W)}$ give a basis of $\C$-vector space $K(\cO_c(W))$, the Grothendick ring of $\cO_c(W)$.
We say that a module $N\in \cO_c(W)$ has a standard filtration if it admits a filtration
$$0=N_0\subset N_1\subset N_1\subset\dots\subset N_n=N$$
such that the quotient $N_i/N_{i-1}$ is isomorphic to a standard module.
 \\Following \cite{DO2003}, let $e_H=|W_H|$ and $$\epsilon_{H,j}=\frac{1}{e_H}\sum_{w\in W_H}det(w)^{j}w$$ the idempotent
 of $\C W_H$ associated to character ${\rm det}_{|W_H}^{-j}$. Given $\gamma$ as before, there is an unique
 family $\{k_{H,i}=k_{H,i}(\gamma)\}_{H\in\cA/W, 0\leq i\leq e_H}$ of elements of $k$ such that
 $k_{H,0}=k_{H,e_H}=0$ and
 \begin{eqnarray}
\gamma_H&=&e_H\sum_{j=0}^{e_H-1}(k_{H,j+1}(\gamma)-k_{H,j}(\gamma))\epsilon_{H,j}\\
&=&\sum_{w\in W_H\setminus\{1\}}\left(\sum_{j=0}^{e_H-1}{\rm det}(w)^{j}(k_{H,j+1}(\gamma)-k_{H,j}(\gamma))\right)w.\label{formulagamma}
\end{eqnarray}
For each $H\in \cA$, we put that
\begin{eqnarray}
a_H=\sum_{j=0}^{e_H-1}e_H k_{H,j}(\gamma) \epsilon_{H,j}.\label{formulaah}
\end{eqnarray}
Let $V_{{\rm reg}}=V\setminus\cup_{H\in \cA}H$ and $\mathcal{D}(V_{{\rm reg}})$ be the ring of differential operators over $V_{\rm reg}$.
In \cite{EG2002}, there is an automorphism defined from
$H_c(W)$ to $\mathcal{D}(V_{{\rm reg}})\rtimes W$ given by $x\mapsto x$, $w\mapsto w$ for
$x\in V^{*}$, $w \in W$,  and
$$y\mapsto T_{y}=\partial_{y}+\sum_{H\in A}\frac{\left<y,\alpha_H\right>}{\alpha_H} a_H$$
for $y\in V$.\\
Let $x_0\in V_{\rm reg}$, and  let $B_W=\pi_{1}(V_{\rm reg}/W, x_0)$ be the Artin Braid group
associated to $W$.
Let $H(W, V,\gamma)$ be the Hecke algebra of $W$ over $\C$ defined as the quotient of
$\C[B_W]$ by the relations (\cite{BMR1998}),
\begin{eqnarray}
(T-1)\prod_{j=1}^{e_H-1}(T-det(s)^{-j}e^{2i\pi k_{H,j}})=0\label{BMRrel}
\end{eqnarray}
for $H\in \cA$, $s\in W$ the reflection around with nontrivial eigenvalue $e^{\frac{2i\pi}{e_H}}$ and $T$
an $s$-generator of the monodromy around $H$.\\
For any $M\in \cO_c(W)$, write
$$M_{\rm reg}=M\otimes_{\C[V]}\C[V_{\rm reg}].$$ By Dunkl isomorphism,
It can be regarded as an $\mathcal{D}(V_{{\rm reg}})\rtimes W$-module which is finitely generated over
$\C[V_{\rm reg}]$. Hence $M_{\rm reg}$ is an $W$-equivariant vector bundle on $V_{\rm reg}$ with an integrable connection $\nabla$ given by $\nabla_y=\partial_y m$ for $m\in M$, $y\in V$. It is proved in \cite{GGOR2003}
that the connection $\nabla$ has regular singularities. Let $\cO_{V_{\rm reg}}^{\rm an}$ the sheaf of holomorphic function on $V_{\rm reg}$. For any
free $\C[V_{\rm reg}]$-module $N$ of finite rank, we consider $N^{\rm an}=N\otimes_{\C[V]}\cO_{V_{\rm reg}}^{\rm an}$. For $\nabla$ an integrable connection on $N$. The sheaf holomorphic horizontal sections
$$N^{\nabla}=\{n\in N^{\rm an}: \nabla_y(n)=0, \forall  y\in V\}$$
is a $W$-equivariant local system on $V_{\rm reg}$. Hence it is a local system on $V_{\rm reg}/W$.
By Riemann-Hilbert correspondence, it yields a finite dimensional representation of $\C B_W$. For $M\in\cO_c(W)$ it is proved (\cite{GGOR2003}) that the action of $\C B_W$ factors through the Hecke
algebra   $H(W, V,\gamma)$.
The Knizhinik-Zamolodchikov functor (${\rm KZ}$) is the functors,
$${\rm KZ}(W,V): \cO_c(W)\rightarrow H(W, V,\gamma),\quad M\mapsto (M_{\rm reg})^{\nabla}.$$
\subsection{Restriction from $H_c(G(r,1,n))$ to $H_c(G(r,p,n))$}
In this section, we will prove the commutativity  the restriction functor  from  $\cH_{r,1,n}(u_1, \ldots, u_r, q)$ to
$\cH_{r,p,n}(v_0,\ldots, v_d,q)$ and the ${\rm KZ}$ functors.\\
In order to  make the definition from \cite{BMR1998} and the definition from \cite{RR2003} compatible and have the Hecke algebra of type
$G(r,p,n)$ as a subalgebra of type $G(r,1,p)$, we have the relation below:
\begin{eqnarray}\label{i=jmodd}
e^{2i\pi k_{H,i}}=e^{2i\pi k_{H,j}}\quad {\rm if }\quad i\equiv j \quad \mod \quad d,
\end{eqnarray}
for $H$ in the  $\mathcal{C}_0$, $G(r,1,n)$-orbit of the hyperplane corresponding to $S_1$.
This will make the relation (\ref{t1uvrel}) and (\ref{BMRrel}) coincide.  \\
Let ${\rm refl}(r,1,n)$ and ${\rm refl}(r,p,n)$ are reflections of $G(r,1,p)$ and $G(r,p,n)$, respectively.
We know that $${\rm refl}(r,1,n)\setminus{\rm refl}(r,p,n)=\{w\in W_H\mid
{\rm det}(w)^{\frac{r}{p}}\neq 1,H\in\mathcal{C}_0 \}\subset\cup_{H\in\mathcal{C}_0} W_H.$$
   The formula (\ref{formulaah}) for $a_H$  can be rewritten as the following
$$a_H=\sum_{w\in W_H}\left(\sum_{j=0}^{e_H-1} k_{H,j}{\rm det}(w)^{j}\right)w.$$
Here  we define the coefficient of $w\in {\rm refl}(r,1,n)\setminus{\rm refl}(r,p,n)$ to be $0$, namely
\begin{eqnarray}\label{khiwj}
\sum_{j=0}^{e_H-1} k_{H,j}{\rm det}(w)^{j}=0,
\end{eqnarray}
then we can verify that the corresponding coefficient in
(\ref{formulagamma}) for $\gamma_H$ is $0$,
because $k_{H,0}=k_{H,e_H}=0$ and
$$\sum_{j=0}^{e_H-1}{\rm det}(w)^{j}k_{H,j+1}={\rm det}(w)^{-1}\sum_{j=1}^{e_H}{\rm det}(w)^{j}k_{H,j}=0.$$
\begin{rem} We can replace (\ref{i=jmodd}) by  a stronger equation as below
\begin{eqnarray}\label{i=jmodd2}
 k_{H,i}= k_{H,j}\quad {\rm if }\quad i\equiv j \quad \mod \quad d,
\end{eqnarray} and we can write
\begin{eqnarray*}
a_H&=&\sum_{w\in W_H}\left(\sum_{j=0}^{e_H-1} k_{H,j}{\rm det}(w)^{j}\right)w\\
   &=& \sum_{j=0}^{e_H-1} k_{H,j}+\sum_{w\in W_H\cap {\rm refl}(r,p,n)}\left(\sum_{j=0}^{e_H-1} k_{H,j}{\rm det}(w)^{j}\right)w\\
   &+&\sum_{w\in W_H\cap({\rm refl}(r,1,n)\setminus{\rm refl}(r,p,n))}\left(\sum_{j=0}^{e_H-1} k_{H,j}{\rm det}(w)^{j}\right)w.
\end{eqnarray*}
 For $w\in W_H\cap({\rm refl}(r,1,n)\setminus{\rm refl}(r,p,n))$, we have ${\rm det}(w)^d\neq 1$, therefore
 it follows that
 \begin{eqnarray*}
 \sum_{j=0}^{e_H-1} k_{H,j}{\rm det}(w)^{j}
 &=&\sum_{l=0}^{d}\sum_{t=0}^{p-1}{\rm det}(w)^{td+l}k_{H,td+l }\\
 &=&\sum_{l=0}^{d}\left(\sum_{t=0}^{p-1}{\rm det}(w)^{td+l}\right)k_{H,l }=0
\end{eqnarray*}
which implies that $a_H$ vanishes on  $w\in W_H\cap({\rm refl}(r,1,n)\setminus{\rm refl}(r,p,n))$.
\end{rem}
\begin{thm}\label{KZREscommute}
Suppose that the parameters of $H_c(G(r,1,n))$ satisfy relations (\ref{i=jmodd}) and (\ref{khiwj}).
Let $Res_{r,p,n}$ and  $Res_{r,p,n}^{'}$ be the restriction functor from   $\cH_{r,1,n}(u_1, \ldots, u_r, q)$-mod to $\cH_{r,p,n}(v_0,\ldots, v_d,q)$-mod and $\cO_{c}(G(r,1,n))$ to $\cO_{c}(G(r,p,n))$, respectively.
Then we have that the following diagram commutes.
\begin{center}
\xymatrix{
\cO_{c}(G(r,1,n))\quad\ar[r]^{Res_{r,p,n}^{'}}\ar[d]^{{\rm KZ}(G(r,1,n),V)}&\quad\cO_{c}(G(r,p,n))\ar[d]^{{\rm KZ}(G(r,p,n),V)}\\
\cH_{r,1,n}(u_1, \ldots, u_r, q)-{\rm mod}\ar[r]^{Res_{r,p,n}}&\cH_{r,p,n}(v_0,\ldots, v_d,q)-{\rm mod}
}
\end{center}
\end{thm}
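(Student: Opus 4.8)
The plan is to take an arbitrary $M\in\cO_c(G(r,1,n))$ and chase it around the square, checking that both composites produce literally the same finite--dimensional module over the subalgebra $\cH_{r,p,n}(v_0,\ldots,v_d,q)\subset\cH_{r,1,n}(u_1,\ldots,u_r,q)$. Note first that $Res_{r,p,n}$ and $Res_{r,p,n}^{'}$ are nothing but restriction of modules along the algebra inclusions $\cH_{r,p,n}(v_0,\ldots,v_d,q)\hookrightarrow\cH_{r,1,n}(u_1,\ldots,u_r,q)$ and $H_c(G(r,p,n))\hookrightarrow H_c(G(r,1,n))$ of Theorem~\ref{invDAHA}; that $Res_{r,p,n}^{'}M$ again lies in $\cO_c(G(r,p,n))$ is immediate because the conditions ``finitely generated over $\C[V]$'' and ``$V$--locally nilpotent'' involve only the common subalgebras $\C[V]$ and $V$ and are unaffected by passing to a subalgebra.

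The first real step is to show that the Dunkl isomorphism of \cite{EG2002} for $H_c(G(r,1,n))$ restricts, on the subalgebra $H_c(G(r,p,n))$, to the Dunkl isomorphism for $H_c(G(r,p,n))$. Writing $\mathcal{A}'$ for the reflection arrangement of $G(r,p,n)$, the computation preceding the Remark shows that hypothesis (\ref{khiwj}) forces each $a_H$ ($H\in\mathcal{A}$) to be a linear combination of $1$ and of reflections lying in $G(r,p,n)$; and on the $G(r,1,n)$--orbit $\mathcal{C}_0$ of coordinate hyperplanes one checks that when $d=1$ the hypothesis already kills every $k_{H,j}$, so that $a_H=0$ there. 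In all cases the differential operator $T_y=\partial_y+\sum_{H}\frac{\langle y,\alpha_H\rangle}{\alpha_H}a_H$ has poles only along hyperplanes of $\mathcal{A}'$ and its group part only involves $G(r,p,n)$, so the Dunkl isomorphism carries $H_c(G(r,p,n))$ isomorphically onto $\mathcal{D}(V_{\rm reg}')\rtimes G(r,p,n)$ with $V_{\rm reg}'=V\setminus\bigcup_{H\in\mathcal{A}'}H$, where it agrees with the Dunkl isomorphism for $H_c(G(r,p,n))$.

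Consequently, for $M\in\cO_c(G(r,1,n))$ the localisation $M_{\rm reg}$ equipped with its flat connection $\nabla_y=\partial_y$ and $G(r,1,n)$--equivariant structure becomes, after restriction, exactly the $G(r,p,n)$--equivariant $\mathcal{D}(V_{\rm reg}')$--module $(Res_{r,p,n}^{'}M)\otimes_{\C[V]}\C[V_{\rm reg}']$ with its connection (when $d>1$ one has $V_{\rm reg}'=V_{\rm reg}$; when $d=1$ the connection has no pole along the discarded coordinate hyperplanes, by the previous step, so it is already defined over $\C[V_{\rm reg}']$). Passing to the sheaf of horizontal holomorphic sections and descending, forgetting $G(r,1,n)$--equivariance to $G(r,p,n)$--equivariance amounts to pulling the resulting local system back along $V_{\rm reg}'/G(r,p,n)\to V_{\rm reg}/G(r,1,n)$ (for $d=1$ one uses in addition that removing the complex, hence real--codimension--two, extra hyperplanes induces a surjection on $\pi_1$, and that the monodromy around them is trivial since there are no poles there). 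On fundamental groups this is the restriction of the $\C B_{G(r,1,n)}$--representation ${\rm KZ}(G(r,1,n),V)(M)$ to the finite--index subgroup (when $d=1$: the image of the relevant subgroup) $B_{G(r,p,n)}$. So both ways around the square give the same $\C B_{G(r,p,n)}$--module.

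It remains to identify this $B_{G(r,p,n)}$--module, which a priori only factors through $\cH_{r,1,n}(u_1,\ldots,u_r,q)=H(G(r,1,n),V,\gamma)$, with $Res_{r,p,n}$ applied to the $\cH_{r,1,n}$--module ${\rm KZ}(G(r,1,n),V)(M)$. Here one invokes the compatibility discussed around (\ref{i=jmodd}): that relation is precisely what makes the $G(r,p,n)$--version of the BMR relations (\ref{BMRrel}) hold inside $\cH_{r,1,n}$ (equivalently, what makes (\ref{t1uvrel}) consistent), so the braid--group homomorphism into $\cH_{r,1,n}$ kills the defining relations of $H(G(r,p,n),V,\gamma)$, and its image is the subalgebra generated by the images $t_0=T_1^p$, $t_1=T_1^{-1}T_2T_1$, $t_i=T_i$ of the $G(r,p,n)$--braid generators, i.e.\ $\cH_{r,p,n}(v_0,\ldots,v_d,q)$. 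Hence the $B_{G(r,p,n)}$--module produced above is genuinely $Res_{r,p,n}\,{\rm KZ}(G(r,1,n),V)(M)$, equal to ${\rm KZ}(G(r,p,n),V)(Res_{r,p,n}^{'}M)$; and since every construction used is functorial in $M$, this gives the asserted natural isomorphism and the commutativity of the diagram. I expect the main obstacle to be exactly this last step combined with the case $d=1$ (the $\ddB_n$--to--$\ddD_n$ situation at the end of the paper): one must line up the braid--group and Hecke--algebra presentations of $G(r,p,n)$ as sitting inside those of $G(r,1,n)$, and reconcile the two a priori distinct regular loci, carefully enough that the identification of monodromy representations is literally the algebra restriction rather than merely an abstract isomorphism.
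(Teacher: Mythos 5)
Your proposal follows essentially the same route as the paper: the parameter conditions (\ref{i=jmodd}) and (\ref{khiwj}) force the Dunkl operators, hence the KZ connections, of $H_c(G(r,1,n))$ and $H_c(G(r,p,n))$ to coincide, after which both composites around the square compute horizontal sections of the same connection on (essentially) the same regular locus. The paper's own proof is much terser --- it disposes of the discrepancy between $V_{\rm reg}$ and $V_{\rm reg}^{'}$ with the single remark that their fraction fields agree, and leaves the final identification of the monodromy representation with the Hecke-algebra restriction as ``it can be verified'' --- so your more careful handling of the $d=1$ case (vanishing of $a_H$ on the coordinate hyperplanes, surjectivity on $\pi_1$ after removing them) and of the braid-group-to-Hecke-algebra comparison supplies details the paper only gestures at, without changing the underlying argument.
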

\begin{proof} Let $\cA(r,1,n)$ and $\cA(r,p,n)$ be the set of reflection hyperplanes  of $G(r,1,n)$
and $G(r,p,n)$, respectively.
We write $V_{\rm reg}=V\setminus\cup_{H\in \cA(r,1,n)}H$ and  $V_{\rm reg}^{'}=V\setminus\cup_{H\in \cA(r,p,n)}H$. Let $\phi_{Dl}$ and $\phi_{Dl}^{'}$ be the Dunkl isomorphisms defined on $H_{c}(G(r,1,n))$
and $H_C(G(r,p,n))$. By assumption, we have
$$\phi_{Dl}(y)=\phi_{Dl}^{'}(y)=\partial_y+\sum_{H\in A}\frac{\left<y,\alpha_H\right>}{\alpha_H} a_H.$$
Therefore the $\nabla_y$ for $H_{c}(G(r,1,n))$
and $H_C(G(r,p,n))$ are the same.
And we see that the quotient field of $V_{\rm reg}$ and $V_{\rm reg}^{'}$ are the same.
Therefore it can be verified that
$$Res_{r,p,n}{\rm KZ}(G(r,p,n),V)(M)=Res_{r,p,n}((M\otimes\C[V_{\rm reg}])^{\nabla}))$$
$$=(Res_{r,p,n}^{'}(M)\otimes\C[V_{\rm reg}]^{'})^{\nabla}={\rm KZ}(Res_{r,p,n}^{'}(M)),$$
for $M\in\cO_{c}(G(r,1,n))$.
 \end{proof}
\begin{cor} We have the following result for $C[V]\otimes V(\lambda)$,
$$Res_{r,p,n}^{'}(C[V]\otimes V(\lambda))=\sum_{l=1}^{e_\lambda}C[V]\otimes V(\bar{\lambda},l).$$
\end{cor}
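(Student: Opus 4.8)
The plan is to recognise $\C[V]\otimes V(\lambda)$ as the standard module $\Delta_{G(r,1,n)}(V(\lambda))=H_c(G(r,1,n))\otimes_{\C G(r,1,n)\ltimes\C[V^*]}V(\lambda)$, where $V(\lambda)$ is the irreducible $G(r,1,n)$-representation indexed by $\lambda$ (the one matching the $\cH_{r,1,n}$-module $V(\lambda)$ under $\cH_{r,1,n}(1,\zeta,\dots,\zeta^{r-1},q)\cong\C G(r,1,n)$), and then to unwind the effect on it of restriction of scalars along the inclusion $H_c(G(r,p,n))\hookrightarrow H_c(G(r,1,n))$ of Theorem~\ref{invDAHA} (which preserves category $\cO$, since $\C[V]$ and $\C[V^*]$ are common to both algebras). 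By the PBW theorem of \cite{EG2002} one has $\Delta_{G(r,1,n)}(V(\lambda))\cong\C[V]\otimes V(\lambda)$ as $\C[V]$-modules, where $\C[V]$ acts by multiplication on the first factor, $w\in G(r,1,n)$ acts diagonally through its natural actions on $\C[V]$ and on $V(\lambda)$, and $y\in V$ acts by $y\cdot(f\otimes v)=[y,f]\cdot(1\otimes v)$; here $[y,f]\in\C[V]\otimes\C G(r,1,n)$ is computed from the commutation relation (\ref{daharel2}) (the $V\subset\C[V^*]$ part of $\C G(r,1,n)\ltimes\C[V^*]$ kills $v$), its group part is supported on $1$ together with the reflections of $G(r,1,n)$, and its $1$-component is $\partial_y f$.

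First I would record the effect of restriction: it keeps the space $\C[V]\otimes V(\lambda)$ and the $\C[V]$-action, replaces the $G(r,1,n)$-action by its restriction to $G(r,p,n)$, and retains the displayed formula for the $y$-action. The crucial observation — already contained in the parameter computation of the preceding remark and of the proof of Theorem~\ref{KZREscommute} — is that under hypotheses (\ref{i=jmodd}) and (\ref{khiwj}) the element $\gamma_H$ of $H_c(G(r,1,n))$ vanishes on every reflection in ${\rm refl}(r,1,n)\setminus{\rm refl}(r,p,n)$, and on the surviving reflections coincides with the $\gamma_H$ of $H_c(G(r,p,n))$; consequently $[y,f]$ in fact lies in $\C[V]\otimes\C G(r,p,n)$ and is literally the same element whether computed in $H_c(G(r,1,n))$ or in $H_c(G(r,p,n))$.

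Next I would invoke Clifford theory for the finite groups. Using $\cH_{r,1,n}(1,\zeta,\dots,\zeta^{r-1},q)\cong\C G(r,1,n)$ and $\cH_{r,p,n}(1,\eta,\dots,\eta^{d-1},q)\cong\C G(r,p,n)$, the statements of Section~\ref{sect:shift} (or \cite{A1995} directly), together with the fact that the relevant $\pi_l$ there are linear characters, give the branching rule $Res^{G(r,1,n)}_{G(r,p,n)}V(\lambda)=\bigoplus_{l=1}^{e_\lambda}V(\bar\lambda,l)$. Since each $V(\bar\lambda,l)$ is a $G(r,p,n)$-submodule of $V(\lambda)$ and $[y,f]\in\C[V]\otimes\C G(r,p,n)$, the summand $\C[V]\otimes V(\bar\lambda,l)$ of $\C[V]\otimes V(\lambda)$ is stable under the $\C[V]$-, $G(r,p,n)$- and $V$-actions, hence is an $H_c(G(r,p,n))$-submodule; comparing those three actions with the ones on $\Delta_{G(r,p,n)}(V(\bar\lambda,l))=\C[V]\otimes V(\bar\lambda,l)$ (same $\C[V]$-action, the restricted $G(r,p,n)$-action on both sides, and the same $y$-formula by the comparison of $\gamma_H$'s) identifies $\C[V]\otimes V(\bar\lambda,l)$ with $\Delta_{G(r,p,n)}(V(\bar\lambda,l))$ as $H_c(G(r,p,n))$-modules. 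Summing over $l$ — the summands being genuine direct summands — gives $Res'_{r,p,n}(\C[V]\otimes V(\lambda))=\bigoplus_{l=1}^{e_\lambda}\C[V]\otimes V(\bar\lambda,l)$. As a consistency check, applying ${\rm KZ}$ and Theorem~\ref{KZREscommute} recovers the decomposition of the corresponding standard $\cH_{r,p,n}(v_0,\dots,v_d,q)$-module found in Section~\ref{sect:shift}.

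I expect the main obstacle to be the second paragraph: confirming that the $y$-action on the restricted standard module coincides exactly with the standard $y$-action for $H_c(G(r,p,n))$, i.e. that under (\ref{i=jmodd}) and (\ref{khiwj}) no ``extra'' reflection of $G(r,1,n)$ enters the commutation relations and that the surviving $\gamma_H$ are precisely those of $H_c(G(r,p,n))$ (including the case $d=1$, where the coordinate hyperplanes of $G(r,1,n)$ must drop out entirely because their $\gamma_H$ becomes $0$). This merely repeats the bookkeeping already carried out around (\ref{khiwj}) and inside the proof of Theorem~\ref{KZREscommute}, and it is the step in which all the hypotheses on the parameters are used; granting it, the Clifford-theoretic branching and the tracking of the module structure are immediate.
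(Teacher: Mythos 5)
Your argument is correct, but note that the paper itself supplies no proof of this corollary at all: it is stated bare, immediately after Theorem~\ref{KZREscommute}, and the placement plus the phrase ``we have the following result'' suggest the intended derivation is to apply ${\rm KZ}$, use the Hecke-side branching $V(\lambda)=\bigoplus_{l}V(\bar\lambda,l)$ from Section~\ref{sect:shift}, and transport the decomposition back through the commutative square. Your route is different and, as written, more robust: you work entirely on the category-$\cO$ side, using the PBW description of $\Delta(V(\lambda))\cong\C[V]\otimes V(\lambda)$, the observation that under (\ref{i=jmodd}) and (\ref{khiwj}) (equivalently, $c_g=0$ for $g\in G(r,1,n)\setminus G(r,p,n)$, the standing hypothesis before Theorem~\ref{invDAHA}) the commutator $[y,f]$ is supported on $\C G(r,p,n)$ and agrees with the one computed in $H_c(G(r,p,n))$, and ordinary Clifford theory for $G(r,p,n)\lhd G(r,1,n)$ with the multiplicity-one refinement coming from $G_\lambda$ cyclic. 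This avoids the gap that the ${\rm KZ}$ route would have to address, namely that ${\rm KZ}$ is only a quotient functor, so a decomposition of ${\rm KZ}(M)$ does not by itself yield a decomposition of $M$ (one needs faithfulness of ${\rm KZ}$ on standardly filtered objects, or your direct identification of the summands). Two small points worth making explicit in a final write-up: the symbol $V(\lambda)$ in the corollary must be read as the irreducible $G(r,1,n)$-module (via the specialization $\cH_{r,1,n}(1,\zeta,\dots,\zeta^{r-1},q)\cong\C G(r,1,n)$), as you do; and in the case $d=1$ the coordinate hyperplanes cease to be reflection hyperplanes of $G(r,p,n)$, so one must check (as you note) that their entire $\gamma_H$ vanishes rather than merely restricting.
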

If we consider the $\tau$-action on the $\cO_{c}(G(r,1,n))$ induced by the automorphism
through the  $\tau$-action on $H_c(G(r,1,n))$, we can get the following Theorem.
\begin{thm} The $\tau$-action on $\cO_{c}(G(r,1,n))$ and on $\cH_{r,1,n}(u_1, \ldots, u_r, q)$-mod commutes with ${\rm KZ}(G(r,1,p), V)$, namely,
$${\rm KZ}(G(r,1,p), V)\circ \tau=\tau \circ {\rm KZ}(G(r,1,p), V).$$
\end{thm}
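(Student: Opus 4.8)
Throughout write $W=G(r,1,n)$. The plan is to recognize $\tau$ as a twist by a linear character of $W$ and to carry that twist through the three ingredients of the ${\rm KZ}$ functor recalled in \S\ref{sect:res and KZ}: the Dunkl isomorphism, the horizontal sections functor $M\mapsto(M_{\rm reg})^{\nabla}$, and the passage to an $\cH_{r,1,n}(u_1,\dots,u_r,q)$-module via Riemann--Hilbert.

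First I would record the character description of $\tau$. Let $\chi$ be the linear character of $W$ with $\chi(S_1)=\xi$ and $\chi(S_i)=1$ for $2\le i\le n$; its kernel is exactly $G(r,p,n)$. By Corollary \ref{tauongroup} and Theorem \ref{invDAHA} the automorphism $\tau$ of $H_c(W)$ is $x\mapsto x$, $y\mapsto y$, $w\mapsto \chi(w)\,w$ for $x\in V^{*}$, $y\in V$, $w\in W$, and it is well defined precisely because the standing assumption $c_g=0$ for $g\notin G(r,p,n)$, together with (\ref{khiwj}), forces each $\gamma_H$ and each $a_H$ to be supported on $W_H\cap G(r,p,n)=W_H\cap\ker\chi$ (by (\ref{khiwj}) and the computation in the Remark when $H$ lies in the orbit $\mathcal{C}_0$, and since $W_H\subseteq G(r,p,n)$ otherwise). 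In particular $\tau$ is the identity on $\C[V]$ and on $\C[V^{*}]$ and fixes every $a_H$. The matching automorphism of $\cH_{r,1,n}(u_1,\dots,u_r,q)$ is, under the identification of $H(W,V,\gamma)$ with $\cH_{r,1,n}(u_1,\dots,u_r,q)$ fixed by (\ref{i=jmodd}), (\ref{t1uvrel}) and (\ref{BMRrel}), the twist by the character $\widehat{\chi}$ of the braid group $B_W$ obtained by pulling $\chi$ back along $B_W\twoheadrightarrow W$; that $\widehat{\chi}$ descends to that Hecke quotient is what these three normalizations guarantee, with $\xi^{p}=1$ ensuring compatibility with (\ref{t1uvrel}).

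Next I would push a module $M\in\cO_c(W)$ through ${\rm KZ}$. Since $\tau$ is the identity on $\C[V]$, the module $({}^{\tau}M)_{\rm reg}$ is literally $M_{\rm reg}$ as a $\C[V_{\rm reg}]$-module; and since $\tau$ fixes every $a_H$, the Dunkl isomorphism $\phi_{Dl}\colon y\mapsto\partial_y+\sum_{H}\frac{\langle y,\alpha_H\rangle}{\alpha_H}a_H$ intertwines $\tau$ on $H_c(W)$ with the automorphism of $\cD(V_{\rm reg})\rtimes W$ that is the identity on $\cD(V_{\rm reg})$ and equals $\tau$ on $W$ --- by the same computation as in the proof of Theorem \ref{KZREscommute}, $\phi_{Dl}(y)$ is unchanged. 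Hence $({}^{\tau}M)_{\rm reg}$ and $M_{\rm reg}$ are the same vector bundle with the same integrable connection $\nabla$, so $(({}^{\tau}M)_{\rm reg})^{\nabla}=(M_{\rm reg})^{\nabla}$ as local systems on $V_{\rm reg}$; only the $W$-equivariant structure changes, the isomorphism implementing $w\in W$ being rescaled by $\chi(w)^{-1}$. Descending to $V_{\rm reg}/W$ and computing monodromy, the element $\sigma\in B_W$ attached to a path from the base point to its $w$-translate acts by parallel transport followed by the equivariant map of $w$, so rescaling the latter by $\chi(w)^{-1}$ rescales this generator by $\chi(w)^{-1}=\widehat{\chi}(\sigma)^{-1}$. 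Therefore ${\rm KZ}(W,V)({}^{\tau}M)$ is ${\rm KZ}(W,V)(M)$ twisted by $\widehat{\chi}^{-1}$, which --- after descent to $\cH_{r,1,n}(u_1,\dots,u_r,q)$, through which ${\rm KZ}(W,V)(M)$ factors --- is precisely ${}^{\tau}\big({\rm KZ}(W,V)(M)\big)$. This identification is natural in $M$, and since the $\tau$-action on $\cH_{r,1,n}(u_1,\dots,u_r,q)$-mod is by definition this same twist, the square commutes.

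I expect the main obstacle to be this last step: converting the rescaling of the $W$-equivariant structure of the bundle on $V_{\rm reg}$ into the twist by $\widehat{\chi}$ of the braid monodromy on $V_{\rm reg}/W$ on the nose --- not merely up to an unidentified scalar --- and confirming that this twist is compatible with the presentation (\ref{BMRrel}) of $H(W,V,\gamma)$, i.e.\ that $\widehat{\chi}$ genuinely descends to the Hecke algebra; this is where (\ref{i=jmodd}), the condition on the $k_{H,j}$ modulo $d$, really enters. By contrast, checking that $\tau$ is $w\mapsto\chi(w)w$ and that it fixes every $a_H$ is routine given (\ref{khiwj}) and the Remark, and the remainder is bookkeeping along the lines of the proof of Theorem \ref{KZREscommute}.
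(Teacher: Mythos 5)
Your proof is correct, but it takes a different route from the paper's. The paper's argument has two steps: first it asserts (essentially without computation) that because $\tau$ fixes $\C[V]$, $\C[V^{*}]$ and rescales $S_1$ by $\xi$, ``by the monodromy'' the induced action on the Hecke algebra is the rescaling $T_1\mapsto\xi T_1$; second, it invokes the representability of ${\rm KZ}$ by a projective object $P_{KZ}$ and concludes from ${\rm Hom}(P_{KZ},M^{\tau})\cong{\rm Hom}(P_{KZ},M)^{\tau}$. You instead unwind the geometric construction of ${\rm KZ}$ directly: you verify that the hypotheses $c_g=0$ off $G(r,p,n)$ and (\ref{khiwj}) make every $a_H$ supported on $\ker\chi$, hence that $({}^{\tau}M)_{\rm reg}$ and $M_{\rm reg}$ carry the \emph{same} Dunkl connection and differ only in the $W$-equivariant structure by $\chi(w)^{-1}$, and you then track this rescaling through the monodromy on $V_{\rm reg}/W$ to land exactly on the $\widehat{\chi}^{-1}$-twist, i.e.\ on ${}^{\tau}({\rm KZ}(M))$. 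What your approach buys is that it actually proves the step the paper only asserts (and which the $P_{KZ}$ argument also silently needs, namely that ${}^{\tau}P_{KZ}\cong P_{KZ}$ compatibly with the Hecke action); what the paper's approach buys is brevity and automatic naturality via representability. Your identification of $\tau$ with the twist by the linear character $\chi$ with $\chi(S_1)=\xi$, $\ker\chi=G(r,p,n)$ is consistent with the paper's own later remark that $\tau(V(\lambda))\cong\chi\otimes V(\lambda)$ (the paper's $\chi(S_1)=\xi^{-1}$ is the inverse convention, matching your twist by $\chi^{-1}$), and your observation that (\ref{i=jmodd}) is what lets $\widehat{\chi}$ descend to the quotient $\cH_{r,1,n}(u_1,\dots,u_r,q)$ is exactly the content of Theorem \ref{tauonha}, so nothing further is needed there.
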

\begin{proof}
Since $\tau$ acts on the $\C[V]$ and $\C[V^*]$ trivially, and on the generators of $G(r,1,n)$ just by changing
$S_1$ to $\xi S_1$, then by the monodromy, it will naturally have the same action on the corresponding
Hecke algebra on the  corresponding generators. At the same time that there is an projective module $P_{KZ}$
in $\cO_c(G(r,1,n))$ such that the $KZ$ functor is isomorphic to ${\rm Hom}_{\cO_c(G(r,1,n))}(P_{KZ},-)$.
Therefore the theorem follows from
$${\rm Hom}_{\cO_c(G(r,1,n))}(P_{KZ},M^{\tau})\cong {\rm Hom}_{\cO_c(G(r,1,n))}(P_{KZ},M)^{\tau},$$
for any $M\in \cO_c(G(r,1,n))$.
\end{proof}
In fact, If we consider the Heckeman-Opdam shift operator in \cite{BC2011},
the automorphism $\tau$ can be represented as an Heckeman-Opdam shift operator on the parameters $\{k_{H,i}\}$ as the following,
\begin{eqnarray*}
\tau(k_{H,i})&=&k_{H,i}-\frac{1}{p}\quad for \quad H\in C_0,\\
 \tau(k_{H,i})&=&k_{H,i}\quad otherwise.
\end{eqnarray*}
 Let $\chi$ be the character function on  $G(r,1,p)$ defined by
 $\chi(S_1)=\xi^{-1}$ and $\chi(S_i)=1$ for $1\leq i\leq 2$.
 Hence we see that $\tau(V(\lambda))\cong \chi\otimes V(\lambda)$.
 By \cite[Theorem 4.2]{O1998}, we have the following corollary.
 \begin{cor} Let $F_{V(\lambda)}(t)$ be the fake degree of $V(\lambda)$ in \cite{O1998}.
 Then there exists an $m\in \Z$, such that
 $$F_{\tau(V(\lambda))}(t)=t^{m}F_{V(\lambda)}(t).$$
 Therefore, for the $r$-tuple Young diagrams $\lambda$, $\lambda'$ are on the same orbit of Ariki's $shift$ operator, $F_{V(\lambda')}(t)= t^{m'}F_{V(\lambda)}(t)$ for some $m'\in \Z$.
 \end{cor}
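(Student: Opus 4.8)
The plan is to combine the two facts already established: the automorphism $\tau$ sends $V(\lambda)$ to $\chi \otimes V(\lambda)$ (shown immediately above the Corollary), and the main result of \cite[Theorem 4.2]{O1998} relating fake degrees of a representation and its twist by a linear character. First I would recall precisely what the fake degree $F_{V(\lambda)}(t)$ is: it is the graded multiplicity of $V(\lambda)$ in the coinvariant algebra $\C[V]/\C[V]^W_+$ of $W = G(r,1,n)$, that is, $F_{V(\lambda)}(t) = \sum_{i\geq 0} \left[\C[V]_i/(\C[V]^W_+)_i : V(\lambda)\right]\, t^i$. The key point from \cite{O1998} is that twisting by a one-dimensional character $\chi$ has a controlled effect: because $\chi$ is itself realized inside the coinvariant algebra in a single degree $m$ (the degree where the $\chi$-isotypic component of the coinvariant algebra sits, which is one-dimensional for a complex reflection group), tensoring with $\chi$ permutes isotypic components and shifts the grading by exactly $m$, so that $F_{\chi\otimes V(\lambda)}(t) = t^m F_{V(\lambda)}(t)$.

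The first step is therefore to apply $\tau(V(\lambda))\cong \chi\otimes V(\lambda)$ together with $F_{\tau(V(\lambda))}(t) = F_{\chi\otimes V(\lambda)}(t)$ (fake degrees depend only on the isomorphism class), which yields the displayed equation $F_{\tau(V(\lambda))}(t) = t^m F_{V(\lambda)}(t)$ with $m$ the degree in which $\chi$ appears in the coinvariant algebra. I would note that $\chi$ has order $p$ here (it sends $S_1 \mapsto \xi^{-1}$ with $\xi$ a primitive $p$th root of unity and fixes the other generators), so $\chi$ does occur in the coinvariant algebra and $m$ is well-defined; moreover $m$ may be negative as an exponent only in the formal sense, but in fact $m \geq 0$ since $\chi$ appears in a genuine polynomial degree — I would phrase the statement with $m\in\Z$ as in the Corollary to be safe, since the relation can equally be read as $F_{V(\lambda)}(t) = t^{-m} F_{\tau(V(\lambda))}(t)$.

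For the second assertion, suppose $\lambda$ and $\lambda'$ lie on the same orbit of Ariki's shift operator, so $\lambda' = \mathrm{shift}^k(\lambda)$ for some $k$. By the Theorem in Section \ref{sect:shift} we have $\tau \circ \mathrm{shift} = 1$ on simple modules, hence $\mathrm{shift}^k = \tau^{-k}$ on the level of simple $\cH_{r,1,n}(u_1,\ldots,u_r,q)$-modules, and correspondingly $V(\lambda') = V(\mathrm{shift}^k(\lambda)) \cong \tau^{-k}(V(\lambda)) \cong \chi^{\otimes(-k)}\otimes V(\lambda)$. Iterating the first part $k$ times (with $\chi^{\otimes j}$ appearing in degree $m_j$ of the coinvariant algebra) gives $F_{V(\lambda')}(t) = t^{m'} F_{V(\lambda)}(t)$ with $m' = -\sum$ of the relevant shift degrees, which is the claim.

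The main obstacle I anticipate is making rigorous the claim that tensoring with a linear character of a complex reflection group shifts the fake degree by a single monomial $t^m$ — this is exactly the content invoked from \cite[Theorem 4.2]{O1998}, so the real work is to check that the character $\chi$ defined here (which is pulled back through the identification $\cH_{r,1,n}(1,\zeta,\ldots,\zeta^{r-1},q)\cong \C G(r,1,n)$, and which twists the reflection representation appropriately) indeed corresponds to a genuine one-dimensional character of $G(r,1,n)$ of the kind to which Opdam's theorem applies, rather than merely a character of the Hecke algebra at a special parameter. Once that identification is in place — and it is, since $\chi(S_1)=\xi^{-1}$, $\chi(S_i)=1$ visibly defines a homomorphism $G(r,1,n)\to\C^\times$ because these values respect the defining braid and order relations — the Corollary follows formally. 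A secondary routine point is to confirm that $\tau$ on $\cO_c(G(r,1,n))$ restricted to the standard modules $\Delta(\sigma)$ induces on the $W$-type $\sigma$ exactly the twist by $\chi$, which is immediate from the description of $\tau$ as acting trivially on $\C[V]$, $\C[V^*]$ and by $S_1 \mapsto \xi S_1$ on the group algebra.
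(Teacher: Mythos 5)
Your proposal follows exactly the paper's route: the paper offers no separate proof of this corollary, deriving it immediately from the identification $\tau(V(\lambda))\cong\chi\otimes V(\lambda)$ established just above it together with the citation of \cite[Theorem 4.2]{O1998}, and your handling of the second assertion via $\tau\circ\mathrm{shift}=1$ is likewise what is intended. One caution: your heuristic justification that tensoring with a linear character ``permutes isotypic components and shifts the grading by exactly $m$'' is not a valid general argument (for $S_n$ the sign twist sends $F_\lambda(t)$ to $t^{n(n-1)/2}F_\lambda(t^{-1})$, not to a monomial multiple), so the monomial-shift property really must be taken from Opdam's theorem for this particular character, as you and the paper both ultimately do.
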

 \subsection{From type $\ddB_n$ to type $\ddD_n$}
 For the classical  Weyl groups, obtaining the type $\ddD_n$ from type $\ddB_n$ is the most canonical application of Clifford theory or spin theory. It is known that the Weyl groups $W(\ddB_n)$ and $W(\ddD_n)$
 are isomorphic to $G(2,1,n)$ and $G(2,2,n)$, respectively. The irreducible representations of $W(\ddB_n)$
 are one to one corresponding to the bipartition of $n$, namely
 $$\{V(\lambda)\mid \lambda=(\lambda_1,\lambda_2),\quad |\lambda_1|+|\lambda_2|=n\}.$$
By the restrction from  $W(\ddB_n)$ to $W(\ddD_n)$, we get the decomposition of bipartition module:\\
$${\rm Res}_{\ddD_n}^{\ddB_n}(V((\lambda_1,\lambda_2)))\cong{\rm Res}_{\ddD_n}^{\ddB_n}(V((\lambda_2,\lambda_1))),\quad  \lambda_1\neq \lambda_2,$$
$${\rm Res}_{\ddD_n}^{\ddB_n}(V((\lambda_1,\lambda_2)))\cong V((\lambda_1,\lambda_2),0)\oplus
V((\lambda_1,\lambda_2),1),\quad  \lambda_1=\lambda_2.$$
which are all the irreducible modules of $W(\ddD_n)$.\\
The reflection representation of $W(\ddB_n)$
and $W(\ddD_n)$  is realized in $\C^{n}$. The reflection hyplanes for
$W(\ddB_n)$
and $W(\ddD_n)$  are  defined by linear equations $\{x_i,\, x_i\pm x_j\}_{1\leq i\neq j\leq n}$
and $\{x_i\pm x_j\}_{1\leq i\neq j\leq n}$, respectively.
If all parameters for $H_c(W(\ddB_n))$ satisfy the relations in the last section,
then we get the decomposition of the standard module for  $H_c(W(\ddB_n))$ for restrcition
to $H_c(W(\ddD_n))$.
\begin{eqnarray*}
\C[\C^n]\otimes V(\lambda)&\cong& \C[\C^n]\otimes V(\bar{\lambda}), \,  \lambda=(\lambda_1,\lambda_2),\lambda_1\neq \lambda_2,\\
\C[\C^n]\otimes V(\lambda)&\cong& \C[\C^n]\otimes V(\bar{\lambda},0)\oplus \C[\C^n]\otimes V(\bar{\lambda},1),
\lambda=(\lambda_1,\lambda_2),\lambda_1=\lambda_2.
\end{eqnarray*}
{\bf Acknowledgement} At the end of the paper, I appreciate   Prof E.~Opdam for his supervision for my postdoc study in UvA and many helpful talks
to produce this paper.

Shoumin Liu\\
Email: s.liu@sdu.edu.cn\\
Taishan College, Shandong University\\
Shanda Nanlu 27, Jinan, \\
Shandong Province, China\\
Postcode: 250100

\end{document}